\newtheorem{theorem}{Theorem}[section]
\newtheorem{lemma}[theorem]{Lemma}
\newtheorem{corollary}[theorem]{Corollary}
\newtheorem{question}[theorem]{Question}
\theoremstyle{definition}
\newtheorem{construction}[theorem]{Construction}
\newtheorem{definition}[theorem]{Definition}
\newtheorem{example}[theorem]{Example}
\renewcommand{\emph}[1]{{\it{#1}}}
\renewcommand{\phi}{\varphi}
\newcommand{\Dis}{\hbox{\rm Dis}}
\newcommand{\BGCG}{\mathop{\rm{BGCG}}}
\newcommand{\SDD}{\mathop{\rm{SDD}}}
\newcommand{\W}{{\rm{W}}}
\newcommand{\PX}{{\rm{PX}}}
\newcommand{\K}{{\rm{K}}}
\newcommand{\V}{{\rm{V}}}
\newcommand{\E}{{\rm{E}}}
\newcommand{\D}{{\rm{D}}}
\newcommand{\R}{{\rm{R}}}
\newcommand{\CC}{{\rm{C}}}
\newcommand{\Aut}{{\rm Aut}}
\newcommand{\Sym}{{\rm Sym}}
\newcommand{\id}{{\rm id}}
\newcommand{\ZZ}{\mathbb{Z}}
\newcommand{\C}{{\mathcal{C}}}
\newcommand{\Pp}{{\mathcal{P}}}
\newcommand{\EOC}{\hfill $\diamond$}
\newcommand{\Mate}{M}
\newcommand{\Mat}{{\rm Mate}}
\newcommand{\Split}{{\rm Split}}
\newcommand{\Pairs}{{\rm Pairs}}
\title[Base Graph -- Connection Graph]{Base Graph -- Connection Graph: Dissection and  Construction}
\author[P. Poto\v{c}nik]{Primo\v{z} Poto\v{c}nik}
\address{Primo\v{z} Poto\v{c}nik\newline
 Faculty of Mathematics and Physics,
 University of Ljubljana, \newline 
Jadranska 19, 1000 Ljubljana, Slovenia;\\
also affiliated with:
Institute of mathematics, Physics and Mechanics, Jadranska 19, Ljubljana, Slovenia.
}\email{primoz.potocnik@fmf.uni-lj.si}
\author[G. Verret]{Gabriel Verret}
\address{Gabriel Verret\newline
Department of Mathematics, The University of Auckland, \newline Private Bag 92019,  Auckland 1142, New Zealand.}
\email{g.verret@auckland.ac.nz}
\author[S. Wilson]{Stephen Wilson}
\address{Stephen Wilson\newline
Department of Mathematics and Statistics, Northern Arizona University, \newline
Box 5717, Flagstaff, AZ 86011, USA.}
 \email{stephen.wilson@nau.edu}
\thanks{The first author is supported by Slovenian Research Agency, projects J1--1691 and P1--0294. The second author is supported by UWA as part of the Australian Research Council grant DE130101001.}
\keywords{graph, tetravalent, $4$-valent, arc-transitive, locally dart-transitive}
\subjclass[2010]{05E18, 20B25}  
\begin{document}

\begin{abstract}
This paper presents a phenomenon which sometimes occurs in  tetravalent bipartite locally dart-transitive graphs, called a {\it Base Graph -- Connection Graph} dissection.  In this dissection, each white vertex is split into two vertices of valence 2 so that the connected components of the result are isomorphic.   Given the Base Graph whose subdivision is isomorphic to each component, and the Connection Graph, which describes how the components overlap, we can,   in some cases, provide a construction which can make  a graph having such a decomposition.  This paper investigates the general phenomenon as well as the special cases in which the connection graph has no more than one edge.
\end{abstract}

\maketitle

\section{Introduction}\label{sec:Intro}

We start with an informal introduction to the topic of this paper. Precise definitions and statements will follow in later sections. Our investigation stems from our desire to understand edge-transitive tetravalent graphs and to construct an extensive census of such graphs (see~\cite{C4} and the accompanying paper \cite{recipe}). 

It is easy to see that if $\Gamma$ is a connected graph and $G\le \Aut(\Gamma)$ acts transitively on the edges of $\Gamma$, then either $G$ acts transitively on the vertices of $\Gamma$, or $\Gamma$ is bipartite with each set of the bipartition being an orbit of $G$. It the latter case, we say that $\Gamma$ is {\it $G$-bi-transitive}, or just {\it bi-transitive} if we do not need to mention $G$ explicitly. Observe that in a $G$-bi-transitive graph $\Gamma$, the stabilizer $G_v$ of each vertex $v$ is transitive on the neighbourhood $\Gamma(v)$. For that reason, $G$-bi-transitive graphs are sometimes known as {\it locally $G$-arc-transitive graphs}.

Bi-transitive graphs are one of the most studied highly symmetrical families of graphs, with most of the work concentrating on the case where the action of $G_v$ on $\Gamma(v)$ is quasi-primitive, primitive or even doubly-transitive (see for example \cite{BonStel,feng,GLP,jaj,pablo,erik}).

In this paper we present a construction, called the $\BGCG$ construction, whose  output is a tetravalent graph $\Gamma$ which is $G$-bi-transitive for some $G\le \Aut(\Gamma)$, such that the stabilizer $G_v$ of some vertex $v$ acts imprimitively on $\Gamma(v)$ (we shall call such graphs {\it tetravalent bi-transitive locally imprimitive}).
It transpires that such graphs make up a very important and difficult-to-handle family of tetravalent edge-transitive graphs, hence our interest in them.

The $\BGCG$ construction is comprehensive in the sense that every tetravalent bi-transitive locally imprimitive graph can be constructed by applying the construction to a suitable smaller tetravalent dart-transitive graph. Since bi-transitive graphs are bipartite, we shall also think of them as being properly colored by two colors, black and white. We shall then call them {\it $2$-colored} graphs.

The comprehensiveness of the $\BGCG$ construction is shown by exhibiting its inverse operation, called dissection, which takes a suitable $2$-colored bi-transitive graph as an input and returns a smaller dart-transitive graph. We first start with a few illustrative examples showing the idea behind dissection, while the precise details and results about the two constructions are in Section~\ref{DiS+BGCG}.

Consider a connected tetravalent $2$-colored graph. Suppose that at each white vertex, we separate that vertex and its four incident edges into two vertices, each incident with two of the four edges.  This process is called a {\it dissection} and it may or may not disconnect the graph.  The result is a graph, still bipartite, in which the black vertices have valency $4$ and the white vertices have valency $2$.  The resulting graph is thus a subdivision $X^*$ of some tetravalent (not necessarily connected) graph $X$. If the original graph is $G$-bi-transitive with $G_v$ acting imprimitively on $\Gamma(v)$ for every white vertex $v$ and if the decomposition of the edges incident to $v$ forms a suitably chosen system of imprimitivity of $G_v$, then $X$ will be dart-transitive.
 
\begin{example} \label{ex:first}
 Consider the circulant graph $\CC_{10}(1, 3)$, shown in Figure~\ref{fig:C1013a} as a $2$-colored graph.  If its white vertices are dissected as in Figure~\ref{fig:C1013b}, the result is $\K_5^*$, as seen in Figure~\ref{fig:C1013c}. Each white vertex of $\CC_{10}(1, 3)$ corresponds to a pair of edges of $\K_5$.

Given the correct pairing of edges in $\K_5$, the graph $\CC_{10}(1, 3)$ can be reconstructed. This is done by first subdividing each edge of $\K_5$ to form $\K_5^*$, and then identifying those pairs of new vertices that correspond to paired edges of $\K_5$.  This (re-)construction of $\CC_{10}(1, 3)$ is called a $\BGCG$ construction. (See Example~\ref{example:later}.)
\end{example}

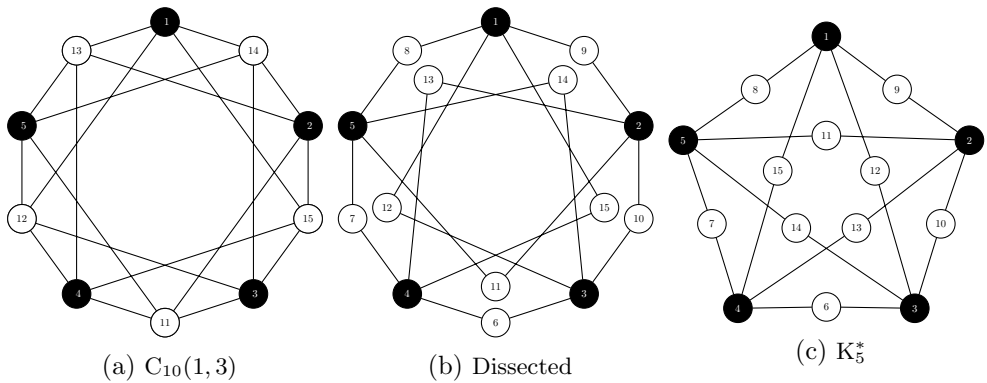
\begin{figure}[hhh]
\centering
\begin{subfigure}{1.7in}

\begin{tikzpicture} [scale=.4, transform shape]
%%\draw  [help lines] (0,0) grid (7,7);
\def\r{5}
\def\s{5}
\def\c{3}
\def\t{5}
\tikzstyle{every node}=[draw,shape=circle, minimum size=27pt, radius= 10, fill = black, text = white];
\node   (1) at (  {\c+\r*sin(0*36)},    {\c+\r*cos(0*36)}  ) {$~1$};
\node (2) at  (  {\c+\r*sin(2*36)},    {\c+\r*cos(2*36)}  ) {$~2$};
\node (3) at  (  {\c+\r*sin(4*36)},    {\c+\r*cos(4*36)}  ) {$~3$};
\node (4) at  (  {\c+\r*sin(6*36)},    {\c+\r*cos(6*36)}  ) {$~4$};
\node (5) at  (  {\c+\r*sin(8*36)},    {\c+\r*cos(8*36)}  ) {$~5$};
\tikzstyle{every node}=[draw,shape=circle, minimum size=27pt,  radius= 10, fill = white, text = black];
\node (6) at  (  {\c+\t*sin(5*36)},    {\c+\t*cos(5*36)}  ) {$~6$};
\node (7) at  (  {\c+\t*sin(7*36)},    {\c+\t*cos(7*36)}  ) {$~7$};
\node (8) at  (  {\c+\t*sin(9*36)},    {\c+\t*cos(9*36)}  ) {$~8$};
\node (9) at  (  {\c+\t*sin(1*36)},    {\c+\t*cos(1*36)}  ) {$~9$};
\node (10) at  (  {\c+\t*sin(3*36)},    {\c+\t*cos(3*36)}  ) {$10$};
\node (11) at (  {\c+\s*sin(5*36)},    {\c+\s*cos(5*36)}  )  {$11$};
\node (12) at  (  {\c+\s*sin(7*36)},    {\c+\s*cos(7*36)}  ) {$12$};
\node (13) at   (  {\c+\s*sin(9*36)},    {\c+\s*cos(9*36)}  ) {$13$};
\node (14) at  (  {\c+\s*sin(1*36)},    {\c+\s*cos(1*36)}  ) {$14$};
\node (15) at  (  {\c+\s*sin(3*36)},    {\c+\s*cos(3*36)}  ) {$15$};
\draw   (1)--(9)--(2)--(10)--(3)--(6)--(4)--(7)--(5)--(8)--(1);
\draw  (11)--(5)--(14)--(3)--(12)--(1)--(15)--(4)--(13)--(2)--(11);
\end{tikzpicture} 

\caption{$\CC_{10}(1, 3)$}\label{fig:C1013a}
\end{subfigure}
~
\begin{subfigure}{1.7in}
\begin{tikzpicture} [scale=.4, transform shape]
%%\draw  [help lines] (0,0) grid (7,7);
\def\r{5}
\def\t{5}
\def\s{3.8}
\def\c{3}
\tikzstyle{every node}=[draw,shape=circle, minimum size=27pt, radius= 1.5, fill = black, text = white];
\node   (1) at (  {\c+\r*sin(0*36)},    {\c+\r*cos(0*36)}  ) {$ 1$};
\node (2) at  (  {\c+\r*sin(2*36)},    {\c+\r*cos(2*36)}  ) {$2$};
\node (3) at  (  {\c+\r*sin(4*36)},    {\c+\r*cos(4*36)}  ) {$3$};
\node (4) at  (  {\c+\r*sin(6*36)},    {\c+\r*cos(6*36)}  ) {$4$};
\node (5) at  (  {\c+\r*sin(8*36)},    {\c+\r*cos(8*36)}  ) {$5$};
\tikzstyle{every node}=[draw,shape=circle,minimum size=27pt,  radius= 1.5, fill = white, text = black];
\node (6) at  (  {\c+\t*sin(5*36)},    {\c+\t*cos(5*36)}  ) {$6$};
\node (7) at  (  {\c+\t*sin(7*36)},    {\c+\t*cos(7*36)}  ) {$7$};
\node (8) at  (  {\c+\t*sin(9*36)},    {\c+\t*cos(9*36)}  ) {$8$};
\node (9) at  (  {\c+\t*sin(1*36)},    {\c+\t*cos(1*36)}  ) {$9$};
\node (10) at  (  {\c+\t*sin(3*36)},    {\c+\t*cos(3*36)}  ) {$10$};
\node (11) at (  {\c+\s*sin(5*36)},    {\c+\s*cos(5*36)}  )  {$11$};
\node (12) at  (  {\c+\s*sin(7*36)},    {\c+\s*cos(7*36)}  ) {$12$};
\node (13) at   (  {\c+\s*sin(9*36)},    {\c+\s*cos(9*36)}  ) {$13$};
\node (14) at  (  {\c+\s*sin(1*36)},    {\c+\s*cos(1*36)}  ) {$14$};
\node (15) at  (  {\c+\s*sin(3*36)},    {\c+\s*cos(3*36)}  ) {$15$};
\draw   (1)--(9)--(2)--(10)--(3)--(6)--(4)--(7)--(5)--(8)--(1);
\draw  (11)--(5)--(14)--(3)--(12)--(1)--(15)--(4)--(13)--(2)--(11);
\end{tikzpicture} 
\caption{Dissected}\label{fig:C1013b}
\end{subfigure}
~
\begin{subfigure}{1.7in}

\begin{tikzpicture} [scale=.4, transform shape]
%%\draw  [help lines] (0,0) grid (7,7);
\def\r{5}
\def\t{4}
\def\s{-1.7}
\def\c{3}
\tikzstyle{every node}=[draw,shape=circle, minimum size=27pt, radius= 1.5, fill = black, text = white];
\node   (1) at (  {\c+\r*sin(0*36)},    {\c+\r*cos(0*36)}  ) {$ 1$};
\node (2) at  (  {\c+\r*sin(2*36)},    {\c+\r*cos(2*36)}  ) {$2$};
\node (3) at  (  {\c+\r*sin(4*36)},    {\c+\r*cos(4*36)}  ) {$3$};
\node (4) at  (  {\c+\r*sin(6*36)},    {\c+\r*cos(6*36)}  ) {$4$};
\node (5) at  (  {\c+\r*sin(8*36)},    {\c+\r*cos(8*36)}  ) {$5$};
\tikzstyle{every node}=[draw,shape=circle, minimum size=27pt, radius= 1.5, fill = white, text = black];
\node (6) at  (  {\c+\t*sin(5*36)},    {\c+\t*cos(5*36)}  ) {$6$};
\node (7) at  (  {\c+\t*sin(7*36)},    {\c+\t*cos(7*36)}  ) {$7$};
\node (8) at  (  {\c+\t*sin(9*36)},    {\c+\t*cos(9*36)}  ) {$8$};
\node (9) at  (  {\c+\t*sin(1*36)},    {\c+\t*cos(1*36)}  ) {$9$};
\node (10) at  (  {\c+\t*sin(3*36)},    {\c+\t*cos(3*36)}  ) {$10$};
\node (11) at (  {\c+\s*sin(5*36)},    {\c+\s*cos(5*36)}  )  {$11$};
\node (12) at  (  {\c+\s*sin(7*36)},    {\c+\s*cos(7*36)}  ) {$12$};
\node (13) at   (  {\c+\s*sin(9*36)},    {\c+\s*cos(9*36)}  ) {$13$};
\node (14) at  (  {\c+\s*sin(1*36)},    {\c+\s*cos(1*36)}  ) {$14$};
\node (15) at  (  {\c+\s*sin(3*36)},    {\c+\s*cos(3*36)}  ) {$15$};
\draw   (1)--(9)--(2)--(10)--(3)--(6)--(4)--(7)--(5)--(8)--(1);
\draw  (11)--(5)--(14)--(3)--(12)--(1)--(15)--(4)--(13)--(2)--(11);
\end{tikzpicture} 
\caption{$\K_5^*$}\label{fig:C1013c}
\end{subfigure}
\caption{Dissection of $\CC_{10}(1, 3)$}\label{fig:C1013}
\end{figure}
  
\begin{example} \label{ex:second}
Consider the rose window graph $\R_{10}(4,1)$ (see \cite{Rose} for the definition of rose window graphs), shown in Figure \ref{fig:R1041a}.  If its white vertices are dissected as in Figure \ref{fig:R1041b}, the resulting graph has two components, as can be seen in Figure \ref{fig:R1041c}, and each of these is isomorphic to $\K_5^*$, as in Figure \ref{fig:R1041d}.
\end{example}

\begin{figure}[hhh]
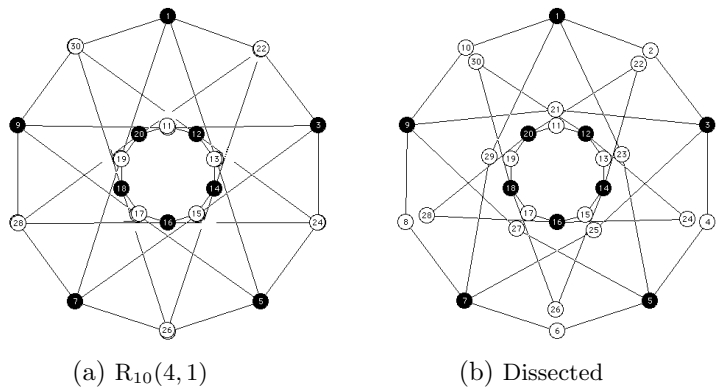

\begin{subfigure}{2in}
\includegraphics[height=45mm]{R1041a.pdf}
\caption{$\R_{10}(4, 1)$}\label{fig:R1041a}
\end{subfigure}
~
\begin{subfigure}{2in}
\includegraphics[height=45mm]{R1041b.pdf}
\caption{Dissected}\label{fig:R1041b}
\end{subfigure}
\label{fig:R1041}
\caption{Dissection of $\R_{10}(4, 1)$}\label{fig:C1041}
\end{figure}

\begin{figure}[hhh]
\centering
\includegraphics[height=45mm]{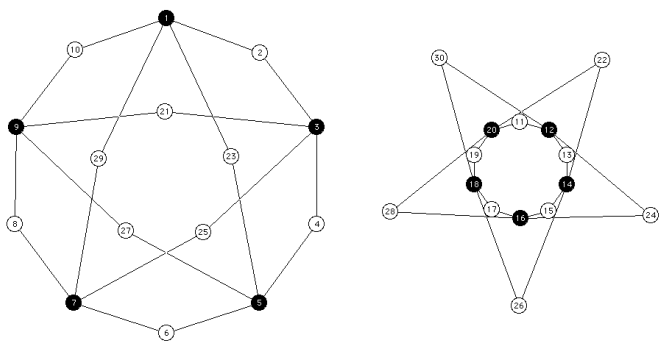}
\caption{The components separated}\label{fig:R1041c}
\end{figure}

\begin{figure}[hhh]
\begin{center}
\includegraphics[height=49mm]{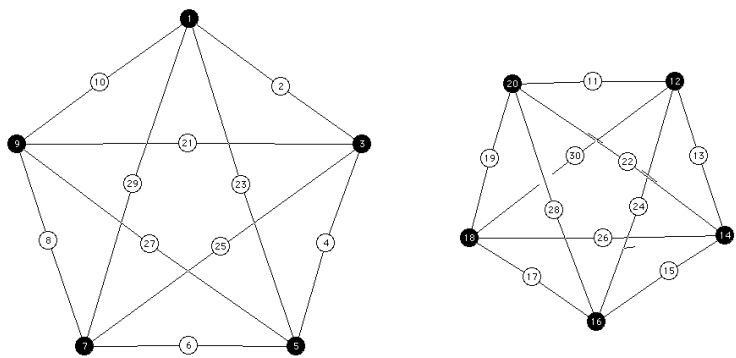}
\caption{The components as copies of $\K_5^*$}\label{fig:R1041d}
\end{center}

\end{figure}

The $\BGCG$ construction and the corresponding dissection will be defined and discussed in precise terms in Section~\ref{DiS+BGCG}.  After a section of preliminary matters, we will  present these two notions in a broader setting of bi-transitive graphs of arbitrary valence.

\section{Preliminaries}

Although we are mainly interested in simple graphs, it will be convenient to allow graphs to have parallel edges. Let us start this section with a quick overview of notions pertaining to such graphs.

In this paper, a \emph{graph} will be a triple $\Gamma=(V,E,\partial)$ where $V$ and $E$ are finite sets and $\partial$ is a mapping from $E$ to ${\V(\Gamma) \choose 2} = \{ X\subseteq \V(\Gamma) \mid |X|=2\}$.  The set $V$ will be called the \emph{vertex-set} of $\Gamma$ and denoted $\V(\Gamma)$, while its elements are called \emph{vertices}. The set $E$ will be called the \emph{edge-set} of $\Gamma$ and denoted $\E(\Gamma)$, while its elements are called \emph{edges}. For an edge $e$, the elements of $\partial(e)$ are called {\it endvertices} of $e$ and two vertices $u,v \in \V(\Gamma)$ are called {\it adjacent} provided there exists $e\in\E(\Gamma)$ such that $\partial(e) = \{u,v\}$. Similarly, two edges $e_1,e_2\in\E(\Gamma)$ are {\it adjacent} if $\partial(e_1)\cap \partial(e_2) \not = \emptyset$. Two edges $e_1, e_2 \in \E(\Gamma)$ with $\partial(e_1) = \partial(e_2)$ are called {\it parallel}.  A graph $\Gamma$ is {\it simple} provided it has no parallel edges or, equivalently, provided the function $\partial$ is injective. In this case, $\E(\Gamma)$ can be identified with a subset of ${\V(\Gamma) \choose 2}$. A simple graph will thus often be given in the usual way, as a pair $(V,E)$ where $E\subseteq {\V(\Gamma) \choose 2}$.

A {\it dart} of $\Gamma$ (sometimes called an arc) is an ordered pair $(e,v)$ where $e \in \E(\Gamma)$ 
is its {\it underlying edge} and $v\in \partial(e)$ is its {\it initial vertex}.
 In a simple graph, a dart $(e,v)$ is uniquely determined by the ordered pair 
 $(v,u)$,
 where $\partial(e) =\{u,v\}$. We will thus often refer to darts in simple graphs as ordered pairs of adjacent vertices.

The {\it neighbourhood} of a vertex $v$ in $\Gamma$, denoted $\Gamma(v)$, is the set of edges $e$ of $\Gamma$ such that $v\in\partial(e)$, and the \emph{valency} of  $v$ is the cardinality of $\Gamma(v)$. If $\Gamma$ is simple, then every edge in $\Gamma(v)$ determines uniquely an adjacent vertex of $v$, hence in this case, $\Gamma(v)$ can be interpreted as the set of vertices adjacent to $v$. A graph will be called \emph{$k$-valent} if all of its vertices have valency $k$. A $4$-valent graph will sometimes be called \emph{tetravalent}. 

A {\it symmetry} $g$ of a graph $\Gamma$ (sometimes called an automorphism) is a permutation of $\V(\Gamma)\cup\E(\Gamma)$ which preserves $\V(\Gamma)$ and $\E(\Gamma)$ and such that, for every $e\in E$, $\partial(e^g)=\partial(e)^g$. Note that we use superscript notation for the action of a permutation:  if  $\Omega$ is a set, if $\omega \in \Omega$, and if $\alpha \in \Sym(\Omega)$, then  $\omega^{\alpha}$ is the image of $\omega$ under $\alpha$.  
It is then convenient to define the product $\alpha\beta$ so that $\omega^{\alpha\beta} = (\omega^{\alpha})^\beta$.
The symmetries of $\Gamma$ form a group under 
this product, denoted $\Aut(\Gamma)$. If $\Gamma$ is simple, then a symmetry of $\Gamma$ is uniquely determined by its action on $\V(\Gamma)$. In this case, we will often think of $\Aut(\Gamma)$ as a group of permutations on $\V(\Gamma)$.
% In the later sections of this paper, we will have cause to regard $A =\Aut(\Gamma)$ to be a group of permutations on the edges of $\Gamma$.

There are obvious induced actions of $\Aut(\Gamma)$ on the vertices, edges and darts of $\Gamma$. These actions are not necessarily faithful. If $G\leq\Aut(\Gamma)$, we say that $\Gamma$ is $G$-vertex- (or $G$-edge- or $G$-dart-) transitive provided that $G$ acts transitively on the vertices (or edges or darts).  When $G = \Aut(\Gamma)$, the prefix $G$ in the above notation is sometimes omitted.

A {\it $2$-colored} graph is a graph together with a proper coloring of its vertices in black and white (that is, each edge has one black and one white endvertex). Clearly, a $2$-colored graph is bipartite. Note that, if a bipartite graph is connected, then, up to permuting the colors, it admits a unique $2$-coloring and hence $\Aut(\Gamma)$ must preserve the partition of $V(\Gamma)$ into colors. A $2$-colored graph $\Gamma$ will be called {\it $G$-bi-transitive} if it is $G$-edge-transitive and $G$ is a color-preserving group of symmetries of $\Gamma$. A $2$-colored graph $\Gamma$ will be called simply {\it bi-transitive} if there exists a $G$ such that $\Gamma$ is $G$-bi-transitive.  If $\Gamma$ is regular     (i.e. all vertices have the same valency), bi-transitive and has no symmetry which reverses color, we say the graph is {\it semisymmetric}.

To {\it subdivide} an edge $e$ with endpoints $\{u, v\}$ is to remove $e$, introduce a 
 vertex $z$ and two new edges: $e_1$ with endpoints $\{u, z\}$ and $e_2$ with endpoints $\{z, v\}$.     If $X$ is a graph, then the {\it subdivision} of $X$, denoted $X^*$, is a $2$-colored graph formed from $X$ by coloring each vertex of $X$ black and then subdividing each edge of $X$ and coloring the new vertices white.
 
The {\it subdivided double} of $X$, denoted $\SDD(X)$, is formed from $X^*$ by ``doubling'' every black vertex, that is, by replacing every black vertex by two new black vertices, each inheriting all the neighbours of the old vertex.  If $X$ is     simple, tetravalent and dart-transitive, then $\SDD(X)$ is always semisymmetric 
(see \cite{PotWilg4} for details).

\section{General dissection and $\BGCG$ construction}
 \label{DiS+BGCG}

While our primary interest is in the BGCG construction applied to tetravalent graphs, we first start with a more general treatment. Let $\Gamma$ be a $2$-colored graph in which every white vertex has even valence. A {\it split at white vertices} of $\Gamma$ is a partition of $\E(\Gamma)$ such that each part consists of exactly two edges and these two edges have a white endvertex in common. If $X$ is a graph, a {\it separating} relation for $X$ is an equivalence relation $\Mate$ on $\E(X)$ such that no two adjacent edges of $X$ are $\Mate$-related.  

\begin{construction}[{\sc Dissection}]\label{cons:dis}
The input of this construction is a pair $(\Gamma,\Delta)$ such that $\Gamma$ is a simple $2$-colored graph with no isolated white vertices and $\Delta$ is a split at white vertices of $\Gamma$. The output is a pair $(X,\Mate)$ where $X=\Dis(\Gamma,\Delta)$ is a graph and $\Mate=\Mat(\Gamma,\Delta)$ is a separating relation for $X$.

The vertex-set of $X$ is the set of black vertices of $\Gamma$. For each part $\{e_1,e_2\}$ of $\Delta$, we create an edge $e$ in $X$ such that $\delta(e)$ consists of the black endvertex of $e_1$ and the black endvertex of $e_2$. Note that, since $\Gamma$ is simple, these two black vertices are distinct. We shall say that the edge $e$ of $X$ \emph{arises} from $v$, where $v$ is the (white) common endvertex of $e_1$ and $e_2$ in $\Gamma$. We say that two edges of $X$ are $\Mate$-related whenever they arise from the same white vertex of $\Gamma$. This is clearly an equivalence relation on $\E(X)$. Since $\Gamma$ is simple, it follows that $\Mate$ is separating. Moreover, if $v$ is a vertex of $X$, then $v$ has the same valency in $X$ as it has in $\Gamma$.
\EOC
\end{construction}

Note that, if $(X,\Mate) = (\Dis(\Gamma,\Delta), \Mat(\Gamma,\Delta))$, then $X$ might be disconnected even if $\Gamma$ is connected and, moreover,  if $\Gamma$ has 4-cycles, then $X$ may not be simple, as in the next example. 

\begin{example}
This  example concerns
 the \emph{wreath graph} $\W(n, 2)$, which is the simple graph with vertex set  $\ZZ_n \times \ZZ_2$, and $(i,j)$ and $(k,l)$ adjacent if and only if $i-k =\pm 1$.  This family of graphs  will be of some importance to us later.  Let $\Gamma=\W(6,2)$, which is shown in Figure \ref{fig:W62}.

\begin{figure}[hhh]
\centering
\includegraphics[height=35mm]{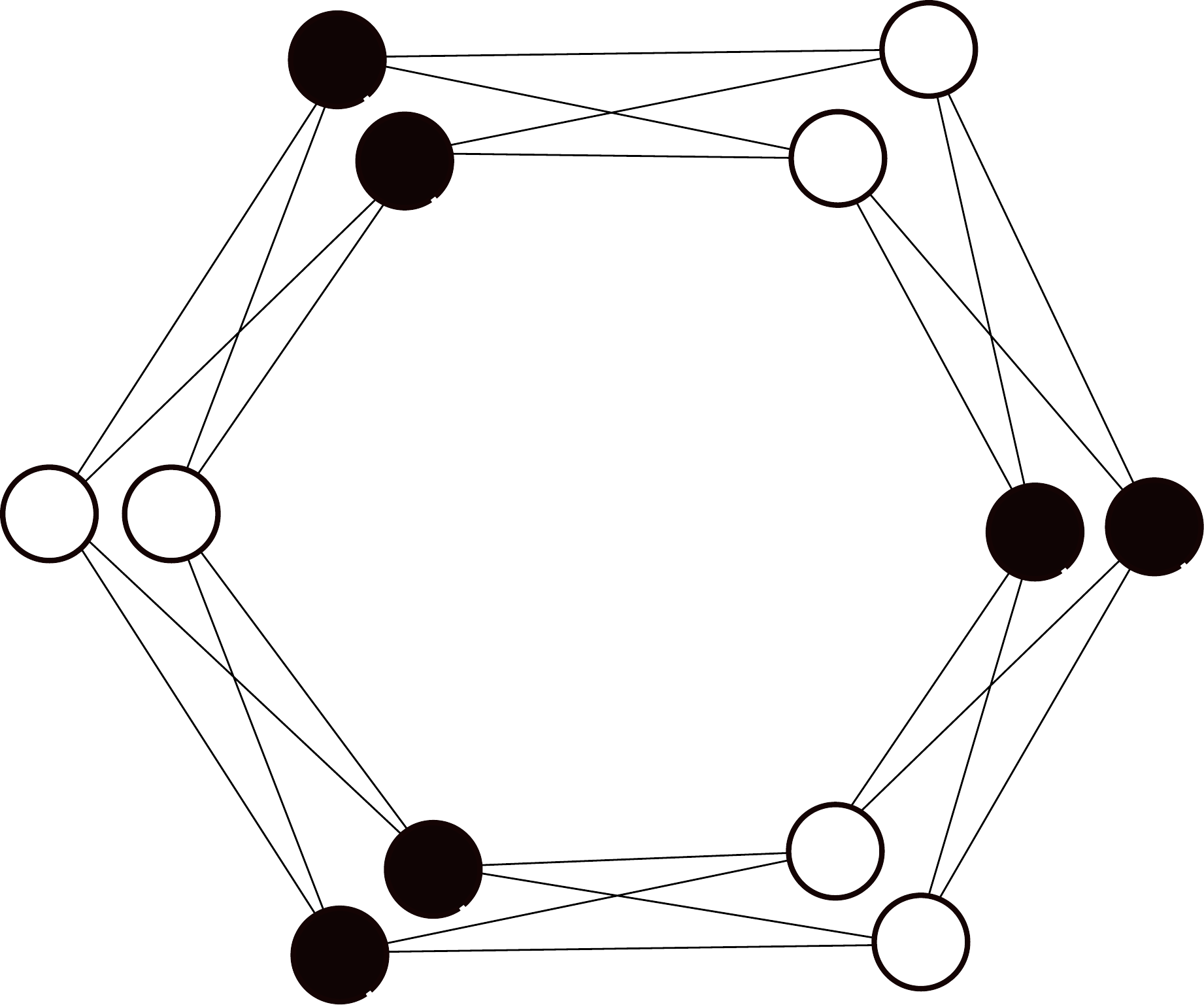}
\caption{$\Gamma=\W(6,2)$}\label{fig:W62}
\end{figure}

We will define two relations on $E(\W(6,2))$.  In the first of these two relations, we say that two edges of $\Gamma$ are related if they have a white endvertex in common and their other endvertices have the same first coordinate. This equivalence relation induces a split $\Delta_1$ of $\Gamma$. The corresponding dissection (in fact, its subdivision) is shown in Figure \ref{fig:W62a}. Note that $\Dis(\Gamma,\Delta_1)$ has three components, each isomorphic to the {\it dipole} with $4$ parallel edges. 

\begin{figure}[hhh]
\centering
\includegraphics[height=35mm]{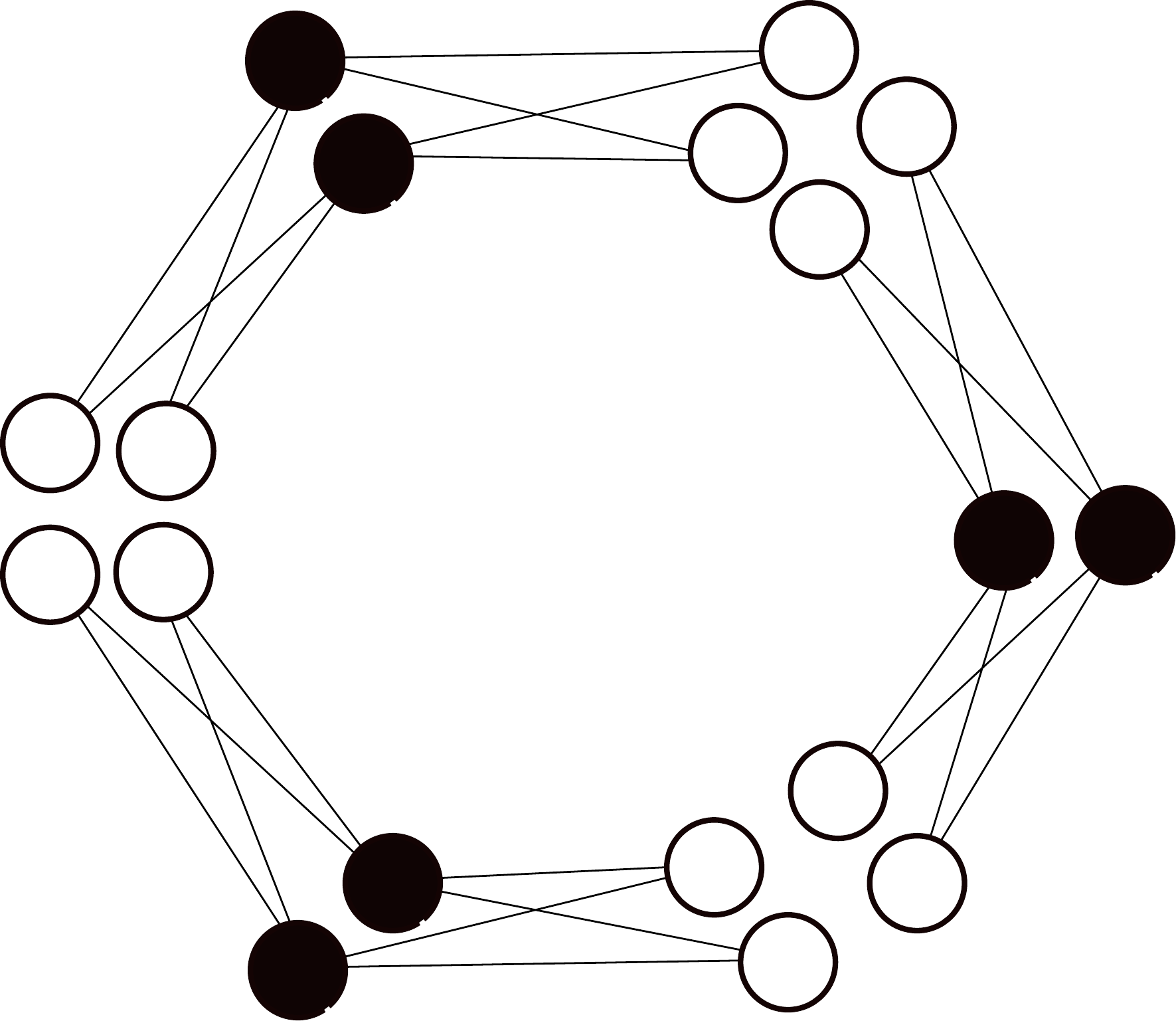}
\caption{Subdivision of $\Dis(\Gamma,\Delta_1)$}\label{fig:W62a}
\end{figure}

Another option is to declare two edges of $\Gamma$ related if they have a white endvertex in common and their other ends have the same second coordinate. This equivalence relation induces a split $\Delta_2$ of $\Gamma$ and the subdivision of the corresponding dissection is shown in Figure~\ref{fig:W62b}. In this case, $\Dis(\Gamma,\Delta_2)$ has two components, each a {\it doubled $3$-cycle}, that is, a simple $3$-cycle with every edge replaced by two parallel edges.

\begin{figure}[hhh]
\centering
\includegraphics[height=35mm]{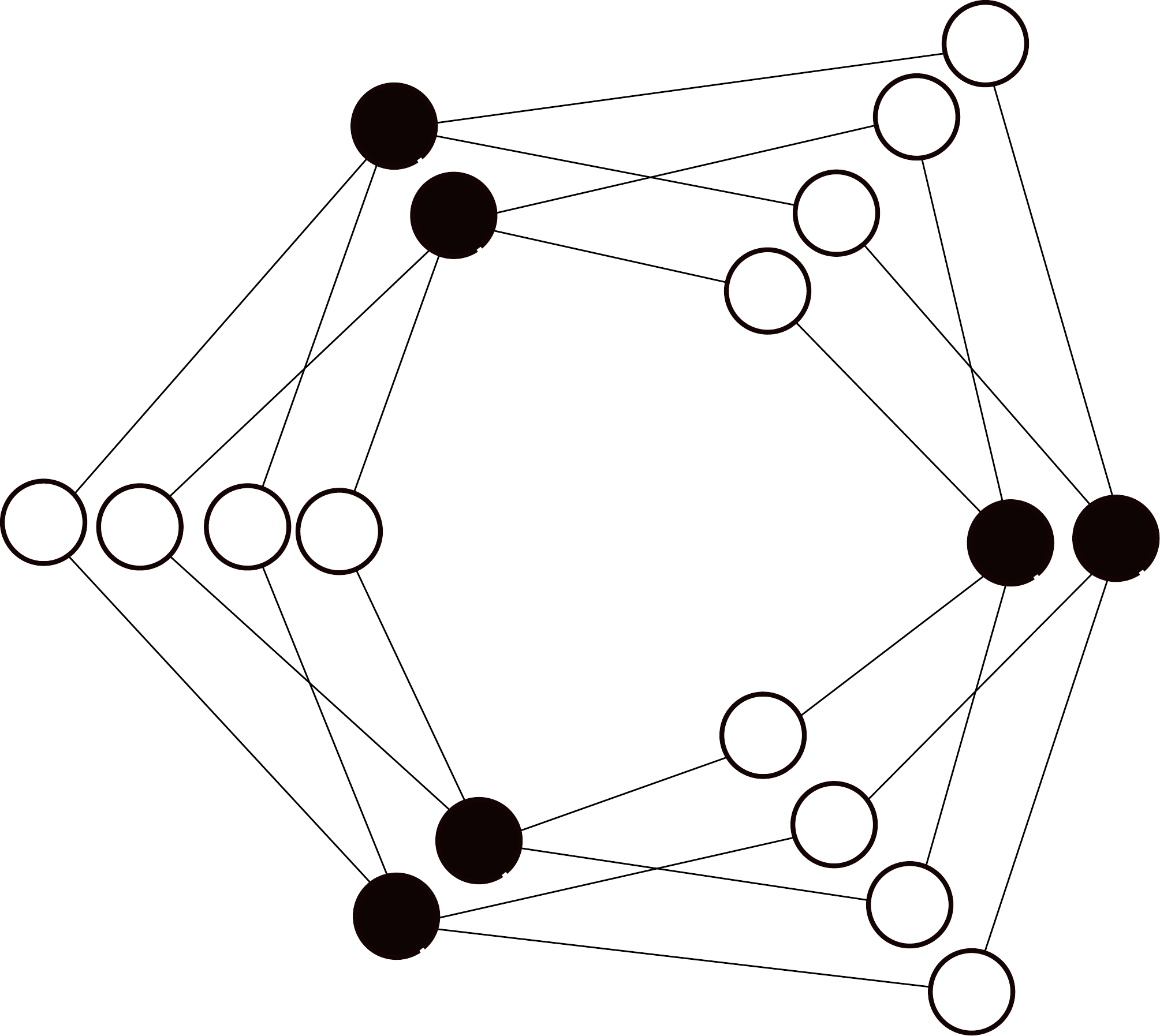}
\caption{Subdivision of $\Dis(\Gamma,\Delta_2)$}\label{fig:W62b}
\end{figure}

\end{example}

Let us now present a construction, which, as will be shown in Lemma~\ref{lem:inverse}, is in some sense 
 an inverse of Construction~\ref{cons:dis}.

\begin{construction}[{\sc BGCG}]\label{cons:BGCG}

The input of this construction is a pair $(X,\Mate)$ where $X$ is a graph and $\Mate$ is a separating relation for $X$. The output of this construction is a pair $(\Gamma,\Delta)$ where $\Gamma=\BGCG(X,\Mate)$ is a simple $2$-colored graph with no isolated white vertices and $\Delta=\Split(X,\Mate)$ is a split at white vertices of $\Gamma$.

Consider the subdivision  $X^*$   of $X$. Note that every black vertex of $X^*$ has the same valency in $X^*$ as it has in $X$ and that every white vertex of $X^*$ has valency $2$.

 Since there is a one--to--one correspondence between the edges of $X$ and the white vertices of $X^*$, the equivalence relation $\Mate$ on $\E(X)$ can be considered as an equivalence relation on the white vertices of $X^*$. We extend $\Mate$ to an equivalence relation on $\V(X^*)$ by declaring that a black vertex of $X^*$ is $\Mate$-related to itself only. For a vertex $v$ of $X^*$, let $[v]$ denote its $\Mate$-equivalence class.

Let $\Gamma=X^*/\Mate$, by which we mean the quotient graph of $X^*$ with respect to the equivalence relation $\Mate$. Explicitly, vertices of $\Gamma$ are equivalence classes of $\Mate$ and, for each edge $e$ of $X^*$ with $\partial(e)=\{u,v\}$, we create an edge $[e]$ of $\Gamma$ with 
$\partial([e])= \{[u],[v]\}$. 

Since the $\Mate$-equivalence classes on $\V(X^*)$ are monochromatic, $\Gamma$ inherits a $2$-coloring from $X^*$ in a natural way. Clearly, $\Gamma$ has no isolated white vertices. Moreover, since $\Mate$ is a separating relation for $X$, it follows that $\Gamma$ is simple and that black vertices of $\Gamma$ have the same valency as they have in $X$. We now define $\Delta$, a split at white vertices of $\Gamma$. Say that two edges of $X^*$ are $R$-related if they have a white endvertex in common. This is clearly an equivalence relation and, since white vertices of $X^*$ have valency $2$, the $R$-equivalence classes have cardinality two. By definition, each edge of $\Gamma$ was induced by an edge of $X^*$ and hence $R$ induces an equivalence relation on the edges of $\Gamma$ which is a split at white vertices of $\Gamma$.
\EOC
\end{construction}

\begin{lemma}\label{lem:inverse}
Let $\Gamma$ be a simple $2$-colored graph with no isolated white vertices and let $\Delta$ be a split at white vertices of $\Gamma$. 
Let $(X,\Mate) = (\Dis(\Gamma,\Delta), \Mat(\Gamma,\Delta))$. Then $\BGCG(X,\Mate) \cong \Gamma$.
\end{lemma}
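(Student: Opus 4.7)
The plan is to exhibit a natural color-preserving bijection on vertices and edges from $\Gamma$ to $\Gamma':=\BGCG(X,\Mate)$ (where $(X,\Mate)=(\Dis(\Gamma,\Delta),\Mat(\Gamma,\Delta))$) that respects the boundary map, by simply unwinding both constructions in sequence. The heart of the argument is to track how a single white vertex of $\Gamma$, together with the edges at it and the way $\Delta$ pairs them, travels through dissection and then through the BGCG step.

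First I would describe the vertex bijection. By Construction~\ref{cons:dis}, $\V(X)$ is exactly the set of black vertices of $\Gamma$, and under the BGCG step each such vertex becomes a singleton $\Mate$-class (hence a black vertex of $\Gamma'$); so black vertices of $\Gamma$ map identically to black vertices of $\Gamma'$. For the white side, observe that each white vertex $v$ of $\Gamma$ has some even valency $2k \ge 2$, and the $2k$ edges of $\Gamma$ incident to $v$ are partitioned by $\Delta$ into exactly $k$ pairs, each of which produces one edge of $X$. These $k$ edges of $X$ are pairwise $\Mate$-related (all arise from $v$) and, by the definition of a split, no edge of $X$ outside of this collection arises from $v$; hence they form one full $\Mate$-class of edges of $X$, which corresponds in $X^*$ to one $\Mate$-class of white vertices, i.e., to a single white vertex of $\Gamma'$. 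This yields a bijection between the white vertices of $\Gamma$ and the white vertices of $\Gamma'$, and the map is color-preserving by construction.

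Next I would define the edge map. Each edge $e$ of $\Gamma$ lies in a unique part $\{e_1,e_2\}$ of $\Delta$ sharing a white endvertex $v$; this part yields a single edge $f$ of $X$ whose endvertices are the two black endvertices of $e_1,e_2$. In $X^*$, subdividing $f$ creates a white vertex $z_f$ together with two edges, one joining $z_f$ to each of these black endvertices; I send $e$ to the one joining $z_f$ to the black endvertex of $e$ itself. This is a bijection from $\E(\Gamma)$ to $\E(X^*)$, and since the BGCG-quotient $X^*/\Mate$ has $\E(\Gamma')$ in canonical bijection with $\E(X^*)$, I get the desired bijection $\E(\Gamma)\to\E(\Gamma')$. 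To check the boundary: if $\partial_\Gamma(e)=\{b,v\}$ with $b$ black and $v$ white, then the corresponding edge of $X^*$ joins $b\in \V(X)$ to $z_f$; passing to the quotient, its endpoints become the singleton class $\{b\}$ and the $\Mate$-class $[z_f]$, which under our vertex bijection are exactly $b$ and $v$.

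I expect no genuine obstacle; the proof is a direct verification that the two constructions undo each other on the nose. The one spot requiring a moment of care is that the $\Mate$-class containing $z_f$ really does correspond to $v$ and to no other white vertex of $\Gamma$ — this uses the fact that $\Delta$ has parts of size exactly two with a unique common white endpoint, so that the white vertex can be read off unambiguously from any edge of $X$ arising from it. Simplicity of $\Gamma'$ (guaranteed by the separating property of $\Mate$, already recorded in Construction~\ref{cons:BGCG}) then ensures our maps are honestly graph isomorphisms rather than maps of multigraphs that accidentally identify distinct edges.
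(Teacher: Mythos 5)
Your proof is correct and follows essentially the same route as the paper: identify black vertices of $\Gamma$ with those of $\BGCG(X,\Mate)$ and white vertices with $\Mate$-classes (using the no-isolated-white-vertices hypothesis and simplicity exactly where the paper does). You merely run the map in the opposite direction and spell out the edge bijection and boundary check that the paper dismisses as a straightforward computation.
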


\begin{proof}
Let us first define a function $\varphi$ from the vertex set of $\BGCG(X,\Mate)$ to the vertex set of $\Gamma$. Observe first that, by definition, the black vertices of $\BGCG(X,\Mate)$ are precisely the black vertices of $X = \Dis(\Gamma,\Delta)$, which are precisely the black vertices of $\Gamma$. Hence we may let the restriction of $\varphi$ on the black vertices of $\BGCG(X,\Mate)$ to be the identity function.

Note that, since $\Gamma$ has no isolated white vertices, there is a natural bijective correspondence between the $\Mate$-equivalence classes and the white vertices of $\Gamma$. On the other hand, a white vertex $v$ of $\BGCG(X,\Mate)$ can be viewed as an  $\Mate$-equivalence class of the edges of $X$, which then corresponds to a white vertex of $\Gamma$. We shall define this white vertex of $\Gamma$ to be the $\varphi$-image of $v$. It is now a matter of straightforward computation to check that $\varphi$ is an isomorphism of graphs.
\end{proof}

Let us now consider the symmetries of the constructed graphs.

\begin{lemma}\label{lemma:yo1}
Let $X$ be  a graph, let $G\leq\Aut(X)$ and let $\Mate$ be a $G$-invariant separating relation for $X$. Then there is a natural action of $G$ as a color-preserving group of symmetries of $\BGCG(X,\Mate)$ and this action is faithful on vertices. Moreover, if $X$ is $G$-dart-transitive, then $\BGCG(X,\Mate)$ is $G$-bi-transitive.
\end{lemma}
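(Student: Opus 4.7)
The plan is to exploit the $G$-action on $X$ to obtain compatible $G$-actions first on $X^*$ and then on the quotient $\Gamma = X^*/\Mate$. Under the identifications implicit in Construction~\ref{cons:BGCG}, the vertex set of $X^*$ decomposes as $\V(X)$ (the black part) together with $\E(X)$ (the white part, one new vertex per edge of $X$), while the edge set of $X^*$ is in bijection with the set of darts of $X$. Since $G\leq \Aut(X)$ acts naturally on $\V(X)$, on $\E(X)$, and on the darts of $X$, I would first verify that assembling these three actions yields a color-preserving subgroup of $\Aut(X^*)$; this is just a check that incidences are respected.

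Next, because $\Mate$ is $G$-invariant on the white vertices of $X^*$ by hypothesis, and every black vertex of $X^*$ forms its own singleton $\Mate$-class, the extension of $\Mate$ to $\V(X^*)$ is also $G$-invariant. Therefore the $G$-action on $X^*$ descends to a color-preserving action on the quotient $\Gamma = \BGCG(X,\Mate) = X^*/\Mate$, which gives the first assertion.

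For faithfulness on $\V(\Gamma)$, suppose $g\in G$ fixes every vertex of $\Gamma$. Since the black vertices of $\Gamma$ are exactly the vertices of $X$, the element $g$ fixes $\V(X)$ pointwise. I claim $g$ also fixes $\E(X)$ pointwise. Indeed, if $g$ moved an edge $e\in\E(X)$ to some other edge $e'$, then $e'$ would have the same endvertices as $e$ (both fixed), hence $e'$ would be parallel to, and in particular adjacent to, $e$. The separating property of $\Mate$ would then force $e$ and $e'$ to lie in distinct $\Mate$-classes, contradicting the fact that $g$ fixes the $\Mate$-class of $e$ as a white vertex of $\Gamma$. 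Thus $g$ fixes $\V(X)\cup\E(X)$ pointwise and so is trivial in $\Aut(X)$. This separating-property argument is the one step I expect to require genuine care; the rest amounts to checking that the actions are well-defined.

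Finally, for the bi-transitivity assertion, observe that the edges of $\Gamma$ are in bijection with the edges of $X^*$, which are in turn in bijection with the darts of $X$, and under this chain of bijections the induced $G$-action on $\E(\Gamma)$ corresponds to the natural $G$-action on the darts of $X$. Hence if $X$ is $G$-dart-transitive, then $G$ is edge-transitive on $\Gamma$, and combined with the color-preservation already established, we conclude that $\BGCG(X,\Mate)$ is $G$-bi-transitive.
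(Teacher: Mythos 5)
Your proposal is correct and follows essentially the same route as the paper's proof: the action is obtained from the identification of black vertices with $\V(X)$ and white vertices with $\Mate$-classes (you merely package this via $X^*$ and the quotient, which is the same check), faithfulness uses exactly the paper's observation that a moved edge would be $\Mate$-related to a parallel (hence adjacent) edge, contradicting the separating property, and bi-transitivity comes from the bijection between edges of $\Gamma$ and darts of $X$. Your write-up just makes explicit a couple of steps the paper leaves implicit.
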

\begin{proof}
Let $\Gamma=\BGCG(X,\Mate)$ and let $g\in G$. Note that $\Gamma$ is simple and hence a symmetry of $\Gamma$ is uniquely determined by its action of $\V(\Gamma)$. The black vertices of $\Gamma$ correspond to the vertices of $X$, while the white vertices of $\Gamma$ correspond to $\Mate$-equivalence class of $E(X)$. Since $\Mate$ is $G$-invariant, this shows that there is a natural color-preserving action of $G$ on $\Gamma$. We show that this action is faithful. Let $g\in G$ such that $g$ fixes all vertices of $\Gamma$. It follows that $g$ fixes all black vertices of $X$ and that $g$ also fixes the $\Mate$-equivalence classes setwise. Since no two adjacent edges of $X$ are $\Mate$-related, it follows that $g=1$. Finally, by definition, edges of $\Gamma$ are induced by darts of $X$ from which the last claim follows.
\end{proof}

It is also true that $\Gamma = \BGCG(X, M)$ may have ``unexpected'' symmetries, i.e., symmetries that do not correspond to symmetries of $X$.  In particular, $\Gamma$ might have symmetries which reverse the colors of the vertices.

\begin{lemma}\label{lemma:yo}
Let $\Gamma$ be a simple $2$-colored graph, let $G$ be a group of color-preserving symmetries of $\Gamma$, and let $\Delta$ be a $G$-invariant split at white vertices of $\Gamma$. Let  $(X,\Mate) = (\Dis(\Gamma,\Delta), \Mat(\Gamma,\Delta))$. Then there is an action of $G$ as a group of symmetries of $X$, such that $\Mate$ is $G$-invariant. Moreover, if $\Gamma$ is $G$-bi-transitive, then $X$ is $G$-dart-transitive.
\end{lemma}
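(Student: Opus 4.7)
The plan is to define the action of $G$ on $X = \Dis(\Gamma,\Delta)$ componentwise, verify it respects incidence, then read off $G$-invariance of $\Mate$ and $G$-dart-transitivity from the definitions.

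First I would define the action on vertices. Since $G$ is color-preserving, each $g\in G$ permutes the black vertices of $\Gamma$, which by Construction~\ref{cons:dis} are exactly the vertices of $X$. This gives a homomorphism from $G$ into $\Sym(\V(X))$. Next, for the action on edges: the edges of $X$ are in bijection with the parts of $\Delta$, and since $\Delta$ is $G$-invariant, $G$ permutes these parts. If $e\in\E(X)$ arises from the part $\{e_1,e_2\}\in\Delta$ with black endvertices $u_1,u_2$ respectively, then $g\in G$ sends $\{e_1,e_2\}$ to some part $\{e_1^g,e_2^g\}$ whose black endvertices are $u_1^g,u_2^g$ (because $g$ is color-preserving and respects $\partial$ on $\Gamma$). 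Thus defining $e^g$ to be the edge of $X$ arising from $\{e_1^g,e_2^g\}$ gives $\partial(e^g)=\{u_1^g,u_2^g\}=\partial(e)^g$, so the assignment $g\mapsto g$ is indeed a homomorphism $G\to\Aut(X)$.

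To see that $\Mate$ is $G$-invariant, recall that two edges of $X$ are $\Mate$-related precisely when they arise from the same white vertex of $\Gamma$. Since $G$ is color-preserving it permutes white vertices, and since $\Delta$ is $G$-invariant the parts of $\Delta$ incident to a common white vertex $v$ are sent by $g$ to the parts incident to $v^g$. Hence $\Mate$-classes are permuted among themselves by $G$.

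Finally, assume $\Gamma$ is $G$-bi-transitive. There is a natural bijection between the darts of $X$ and the edges of $\Gamma$: a dart $(e,u)$ of $X$, where $e$ arises from $\{e_1,e_2\}\in\Delta$ and (say) $e_1$ has $u$ as its black endvertex, corresponds to the edge $e_1\in\E(\Gamma)$; conversely any edge of $\Gamma$ lies in a unique part of $\Delta$ and has a unique black endvertex, determining a unique dart of $X$. By construction this bijection is $G$-equivariant. Since $\Gamma$ is $G$-edge-transitive, $G$ acts transitively on $\E(\Gamma)$, and hence transitively on the darts of $X$. The main subtlety is simply book-keeping: checking that the bijection between parts of $\Delta$ and edges of $X$, and between edges of $\Gamma$ and darts of $X$, interact compatibly with $G$; once the correspondences are set up cleanly, the conclusions are immediate.
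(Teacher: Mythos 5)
Your argument is correct and follows essentially the same route as the paper: define the action of $G$ on $\V(X)$ via the black vertices and on $\E(X)$ via the $G$-invariant parts of $\Delta$, observe that $\Mate$-classes (edges arising from a common white vertex) are permuted, and obtain dart-transitivity from the natural $G$-equivariant bijection between darts of $X$ and edges of $\Gamma$. The only point the paper adds that you omit is the (easy) check that the resulting action is faithful --- an element of $G$ fixing every vertex and edge of $X$ fixes all black and all white vertices of the simple graph $\Gamma$, hence is the identity --- which justifies the phrase ``acts as a group of symmetries of $X$.''
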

\begin{proof}
Since $\Gamma$ is simple, we can think of $G$ as a permutation group on $\V(\Gamma)$. Let $g\in G$ and let $v\in\V(X)$. Recall that $\V(X)$ is the set of black vertices of $\Gamma$ and hence $v^g$ is well-defined. Now, let $e\in\E(X)$ and let $a$ and $b$ be the endvertices of $e$. The edge $e$ of $X$ arose from a white vertex $w$ of $\Gamma$ and $\{\{a,w\},\{b,w\}\}\in\Delta$. It follows that $w^g$ is a white vertex of $\Gamma$ and, since $\Delta$ is $G$-invariant, that $\{\{a^g,w^g\},\{b^g,w^g\}\}\in\Delta$. Define $e^g$ to be the edge of $X$ which arose from $w^g$ corresponding to $\{a^g,b^g\}$. It is not too hard to check that $g$ is a symmetry of $X$ and that the only element of $G$ which fixes every vertex and every edge of $X$ is the identity. In particular, $G$ acts as a group of symmetries of $X$.  Since two edges of $X$ are $\Mate$-related whenever they arose from the same white vertex of $\Gamma$, it follows that $\Mate$ is $G$-invariant. Finally, by definition, each edge of $X$ corresponds to an element of $\Delta$ which is a pair of edges of $\Gamma$. By viewing an edge of $X$ as a pair of darts, we obtain a natural correspondence between darts of $X$ and edges of $\Gamma$. It follows that if $\Gamma$ is $G$-edge-transitive, then $X$ is $G$-dart-transitive.
\end{proof}

\section{Locally imprimitive bi-transitive graphs}

Let $\Gamma$ be  a simple graph, let $v$ be a vertex of $\Gamma$ and let $G\leq\Aut(\Gamma)$. 
We denote by $G_v$ the stabilizer of the vertex $v$ in $G$ and by  $G_v^{\Gamma(v)}$  the permutation group induced by the action of $G_v$ on $\Gamma(v)$. If $\Gamma$ is $G$-bi-transitive, then $G_v^{\Gamma(v)}$ is transitive. If, in addition, $G_v^{\Gamma(v)}$ is imprimitive for some vertex $v$, then we say that $\Gamma$ is {\it $G$-locally imprimitive}.

As mentioned in the introduction, our original interest in the $\BGCG$ construction came from our desire to understand tetravalent $G$-locally imprimitive graphs.
We start with Theorem~\ref {theo:equivalent2} where we prove that a $\BGCG$ construction, when applied to a $G$-dart-transitive graph admitting an appropriate $G$-invariant  separating relation, indeed yields a $G$-locally imprimitive graph. 
We then continue with Theorem~\ref{theo:equivalent} where we prove that every $G$-locally imprimitive graph can be obtained in that way.

Even though later we focus exclusively on tetravalent graphs, 
these two theorems apply to bi-transitive graphs of possibly higher valences.

\begin{theorem}
\label{theo:equivalent2}
Let $X$ be a $k$-valent $G$-dart-transitive graph and let $\Mate$ be a $G$-invariant separating relation for $X$ such that each $\Mate$-equivalence class has cardinality $d$. 
Let $\Gamma=\BGCG(X, \Mate)$. Then $\Gamma$ is a simple $G$-bi-transitive $2$-colored graph with black vertices having valency $k$ and white vertices having valency $2d$. Moreover, for every white vertex $v$ of $\Gamma$, the group $G_v^{\Gamma(v)}$ is imprimitive. 
\end{theorem}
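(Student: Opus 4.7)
The plan is to verify the three assertions in turn; the structural claims and the bi-transitivity are essentially bookkeeping from Construction~\ref{cons:BGCG} and Lemma~\ref{lemma:yo1}, while the main work lies in exhibiting the imprimitive block system.

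First I would unpack Construction~\ref{cons:BGCG} to confirm the structural claims. The black vertices of $\Gamma$ are the vertices of $X$, so each has valency $k$ by hypothesis. A white vertex $v$ of $\Gamma$ is an $\Mate$-equivalence class on $\E(X)$ of size $d$; viewed in $X^*$, it is obtained by identifying $d$ white vertices, each of valency $2$. Because $\Mate$ is separating, no two of the resulting $2d$ incident edges share a black endvertex, so none collapse upon passing to the quotient, giving $v$ valency exactly $2d$ in $\Gamma$. The $G$-bi-transitivity and the color-preserving action of $G$ on $\Gamma$ then come directly from Lemma~\ref{lemma:yo1}.

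For the imprimitivity assertion, I would take as my non-trivial $G_v$-invariant partition of $\Gamma(v)$ the trace on $\Gamma(v)$ of the split $\Delta=\Split(X,\Mate)$ that Construction~\ref{cons:BGCG} outputs alongside $\Gamma$: two edges of $\Gamma$ incident to $v$ lie in the same part of $\Delta$ precisely when they are images of the two halves of the subdivision in $X^*$ of a common edge of $X$ belonging to the class corresponding to $v$. This partitions $\Gamma(v)$ into $d$ blocks of size $2$, and its $G_v$-invariance follows from the $G$-invariance of $\Mate$, since the defining relation on $\E(X^*)$ that induces $\Delta$ on $\E(\Gamma)$ uses only the $\Mate$-structure and the subdivision. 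Combined with the transitivity of $G_v$ on $\Gamma(v)$ (a consequence of $G$-edge-transitivity, itself guaranteed by Lemma~\ref{lemma:yo1}), this exhibits $G_v^{\Gamma(v)}$ as imprimitive.

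The main point to watch, and the implicit hypothesis to flag, is the non-triviality of the block system, which requires $d\ge 2$; when $d=1$ the white vertices of $\Gamma$ have valency $2$ and admit no non-trivial block system, so the statement is to be read with that proviso (and in the tetravalent case of principal interest, $d=2$, this is automatic). Beyond this, the whole argument is a translation between $\Mate$-classes and darts of $X$ on the one hand, and the local edge-structure at white vertices of $\Gamma$ on the other, with the $G$-action carried along by the $G$-invariance of $\Mate$; I expect no combinatorial surprises.
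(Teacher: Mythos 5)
Your proposal is correct and follows essentially the same route as the paper: the structural claims and valency count come from Construction~\ref{cons:BGCG}, bi-transitivity from Lemma~\ref{lemma:yo1}, and imprimitivity from the $d$ blocks of size $2$ given by the split $\Split(X,\Mate)$ (you merely spell out the $G_v$-invariance that the paper leaves implicit). Your observation that imprimitivity requires $d\ge 2$ is a fair reading of an implicit hypothesis, and does not change the argument.
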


\begin{proof}
Let $\Delta= \Split(X, \Mate)$.
As we saw in Construction~\ref{cons:BGCG}, $\Gamma$ is a simple $2$-colored graph with no isolated white vertices and with black vertices having valency $k$, while $\Delta$ is a split at white vertices of $\Gamma$. By Lemma~\ref{lemma:yo1}, there is a natural bi-transitive action of $G$ on $\Gamma$. Finally, since $\Mate$-equivalence classes have cardinality $d$, it follows that any white vertex $v$ of $\Gamma$ has valency $2d$ and that $G_v^{\Gamma(v)}$ is imprimitive with $d$ blocks of size $2$. 
\end{proof}

\begin{example}
Let $H$ be the Heawood graph, shown in Figure~\ref{Hea}. Let $M$ be the equivalence relation on edges of $H$ of being parallel in this diagram of $H$. Clearly, $M$ is a separating relation and $M$-equivalence classes have cardinality three. (One such class is $\{\{14, 1\}, \{10, 5\}, \{8, 7\}\}$.)  The group of symmetries of $H$ which preserve $M$ is transitive on darts hence, by Theorem~\ref{theo:equivalent2}, $\BGCG(H, \Mate)$ is a simple bi-transitive $2$-colored graph with $14$ black vertices having valency $3$ and $7$ white vertices having valency $6$.   The graph $\BGCG(H, \Mate)$ also arises from the chiral map $\{3,6\}_{2, 1}$ on the torus (the  triangle embedding of $K_7$ in the torus, see \cite{CM}); vertices of the graph are vertices and faces of the map, and two vertices of the graph  are adjacent when the corresponding face and vertex of the map are incident.
\end{example}

\begin{figure}[hhh]
\begin{center}
\begin{tikzpicture} [scale=.4]
%%\draw  [help lines] (0,0) grid (7,7);
\def\r{5}
\def\s{5}
\def\c{3}
\def\t{180/7}
\tikzstyle{every node}=[draw,shape=circle, minimum size=20pt, fill = white, text = black];
\node   (1) at (  {\c+\r*sin(1*\t-.5*\t)},    {\c+\r*cos(1*\t-.5*\t)}  ) {$ 1$};
\node (2) at  (  {\c+\r*sin(2*\t-.5*\t)},    {\c+\r*cos(2*\t-.5*\t)}  ) {$2$};
\node (3) at  (  {\c+\r*sin(3*\t-.5*\t)},    {\c+\r*cos(3*\t-.5*\t)}  ) {$3$};
\node (4) at  (  {\c+\r*sin(4*\t-.5*\t)},    {\c+\r*cos(4*\t-.5*\t)}  ) {$4$};
\node (5) at  (  {\c+\r*sin(5*\t-.5*\t)},    {\c+\r*cos(5*\t-.5*\t)}  ) {$5$};
\node (6) at  (  {\c+\r*sin(6*\t-.5*\t)},    {\c+\r*cos(6*\t-.5*\t)}  ) {$6$};
\node (7) at  (  {\c+\r*sin(7*\t-.5*\t)},    {\c+\r*cos(7*\t-.5*\t)}  ) {$7$};
\node (8) at  (  {\c+\r*sin(8*\t-.5*\t)},    {\c+\r*cos(8*\t-.5*\t)}  ) {$8$};
\node (9) at  (  {\c+\r*sin(9*\t-.5*\t)},    {\c+\r*cos(9*\t-.5*\t)}  ) {$9$};
\node (10) at  (  {\c+\r*sin(10*\t-.5*\t)},    {\c+\r*cos(10*\t-.5*\t)}  ) {$10$};
\node (11) at (  {\c+\r*sin(11*\t-.5*\t)},    {\c+\r*cos(11*\t-.5*\t)}  )  {$11$};
\node (12) at  (  {\c+\r*sin(12*\t-.5*\t)},    {\c+\r*cos(12*\t-.5*\t)}  ) {$12$};
\node (13) at   (  {\c+\r*sin(13*\t-.5*\t)},    {\c+\r*cos(13*\t-.5*\t)}  ) {$13$};
\node (14) at  (  {\c+\r*sin(14*\t-.5*\t)},    {\c+\r*cos(14*\t-.5*\t)}  ) {$14$};
\draw   (1)--(2)--(3)--(4)--(5)--(6)--(7)--(8)--(9)--(10)--(11);
\draw  (11)--(12)--(13)--(14)--(1) (1)--(6)  (3)--(8)  (5)--(10);
\draw (7)--(12)  (9)--(14)  (11)--(2)  (13)--(4);
\end{tikzpicture} 
\caption {The Heawood graph}
\label{Hea}
\end{center}
\end{figure}
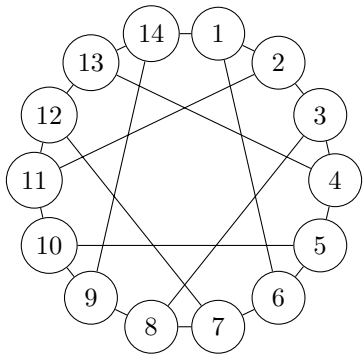

We shall now present a construction that assigns a split at a white vertex for every bi-transitive locally imprimitive graph of valence $\{4,k\}$.

\begin{construction}
\label{con:split}
The input of this construction is
a simple $G$-bi-transitive $2$-colored graph $\Gamma$ with black vertices of valency $k$ and white vertices of valency $4$,
and a system of imprimitivity $\Delta(v)$ for $G_v^{\Gamma(v)}$ for some white vertex $v$. The output is a split $\Delta = \Pairs(\Gamma, G, \Delta(v))$ at white vertices of $\Gamma$.

Observe first that since $G_v^{\Gamma(v)}$ is imprimitive, the permutation group $G_v^{\Gamma(v)}$ 
is permutation isomorphic to one of the groups $\ZZ_2^2$, $\ZZ_4$ or $\D_4$ in their natural transitive actions on four points. 
Observe that, in all three cases, there exists a normal subgroup $N$ of $G_v$ such that $\Delta(v) = \{e^N : e\in \Gamma(v)\}$.
For $g\in G$, let $\Delta(v)^g$ denote the set $\{D^g : D \in \Delta(v)\}$.

Since $G$ is transitive on the set of all white vertices, we can choose $g_w\in G$ for each white vertex $w$ such that $v^{g_w} = w$. 
Let $\Delta(w) = \Delta(v)^{g_w}$ and note that, since $N$ is normal in $G_v$, $\Delta(w)$ does not depend on the choice of $g_w$. 
%Furthermore, $\Delta(w)^g = \Delta(w^g)$ for every $g\in G$ and every white vertex $w$. 

We say that two edges of $\Gamma$ are $\Delta$-related if they have a common white endvertex $v$ and they are in the same block of $\Delta(v)$. It is not hard to see that $\Delta$ is a split at white vertices of $\Gamma$. \EOC
\end{construction}

\begin{theorem}
\label{theo:equivalent}
Let $\Gamma$ be a simple $G$-bi-transitive $2$-colored graph. Suppose that black vertices of $\Gamma$ have valency $k$ while white vertices have valency $4$, and that there exists a white vertex $v$ of $\Gamma$ such that $G_v^{\Gamma(v)}$ is imprimitive. Let $\Delta(v)$ be an imprimitivity system of $G_v^{\Gamma(v)}$.
Then $\Gamma\cong\BGCG(X,\Mate)$ where $X$ is a $k$-valent (not necessarily simple) 
$G$-dart-transitive graph and $\Mate$ is a $G$-invariant separating relation for $X$ such that each $\Mate$-equivalence class has cardinality $2$.
In fact, $X$ and $\Mate$ can be chosen to be $\Dis(\Gamma,\Delta)$ and $\Mat(\Gamma,\Delta)$, respectively, where $\Delta = \Pairs(\Gamma, G, \Delta(v))$.
\end{theorem}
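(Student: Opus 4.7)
The plan is to assemble the theorem directly from the machinery already developed in Constructions~\ref{cons:dis} and~\ref{con:split} together with Lemmas~\ref{lem:inverse} and~\ref{lemma:yo}. The bulk of the work has been front-loaded into those tools, so the proof reduces to verifying that the hypotheses of each lemma are met.

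First I would apply Construction~\ref{con:split} to $(\Gamma, G, \Delta(v))$ to produce a candidate $\Delta = \Pairs(\Gamma, G, \Delta(v))$. The two things that need checking here are: (i) $\Delta$ is genuinely a split at white vertices, and (ii) $\Delta$ is $G$-invariant. For (i), I would first note that since $G_v^{\Gamma(v)}$ is an imprimitive transitive subgroup of $\Sym(4)$, it is permutation-isomorphic to $\ZZ_2^2$, $\ZZ_4$, or $\D_4$, and in each case the only nontrivial block system on four points consists of two blocks of size $2$; so $\Delta(v)$ partitions $\Gamma(v)$ into two pairs. Because $G$ is transitive on white vertices, the same is true of every white vertex $w$ via the transported system $\Delta(w) = \Delta(v)^{g_w}$, and this does not depend on the choice of $g_w$ by the normality of $N$ recorded in Construction~\ref{con:split}. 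This yields a partition of $\E(\Gamma)$ into pairs sharing a white endvertex, i.e., a split at white vertices. For (ii), $G$-invariance follows immediately from the definition: $\Delta(w)^h = \Delta(v)^{g_w h} = \Delta(w^h)$ for every $h \in G$.

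Next I would feed $(\Gamma, \Delta)$ into Construction~\ref{cons:dis} to obtain $(X, \Mate) = (\Dis(\Gamma,\Delta), \Mat(\Gamma,\Delta))$. By the construction, $\Mate$ is automatically a separating relation on $X$, and the valency of each (black) vertex of $X$ equals its valency in $\Gamma$, namely $k$; so $X$ is $k$-valent. Moreover, since each white vertex of $\Gamma$ contributes exactly two parts of $\Delta$ (the two blocks of size two in the block system $\Delta(w)$), each $\Mate$-equivalence class on $\E(X)$ has cardinality $2$, as required.

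Finally, I would invoke the two lemmas. Since $\Delta$ is a $G$-invariant split and $\Gamma$ is $G$-bi-transitive, Lemma~\ref{lemma:yo} supplies a natural action of $G$ on $X$ under which $\Mate$ is $G$-invariant and $X$ is $G$-dart-transitive. Lemma~\ref{lem:inverse} then gives $\BGCG(X, \Mate) \cong \Gamma$, completing the proof. The only step that is more than routine bookkeeping is the verification that $\Delta$ in Construction~\ref{con:split} is well-defined and $G$-invariant; once the normal-subgroup description $\Delta(v) = \{e^N : e \in \Gamma(v)\}$ is in hand, this is immediate, but it is the step where the imprimitivity hypothesis is genuinely used.
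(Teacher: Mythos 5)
Your proposal is correct and follows essentially the same route as the paper: define $\Delta = \Pairs(\Gamma,G,\Delta(v))$, take $(X,\Mate)=(\Dis(\Gamma,\Delta),\Mat(\Gamma,\Delta))$, observe the valency and class-size facts, and conclude via Lemma~\ref{lem:inverse} and Lemma~\ref{lemma:yo}. The extra verifications you spell out (that $\Delta$ is a well-defined, $G$-invariant split) are exactly what the paper delegates to Construction~\ref{con:split}, so there is no substantive difference.
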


\begin{proof}
Let $\Delta = \Pairs(\Gamma, G, \Delta(v))$,
let $X = \Dis(\Gamma,\Delta)$ and let $\Mate = \Mat(\Gamma,\Delta)$. Then $X$ is a $k$-valent graph and $\Mate$ is a separating relation for $X$. Since $\Gamma$ is $4$-valent, the $\Mate$-equivalence classes have cardinality 2. By Lemma~\ref{lem:inverse}, we have  $\BGCG(X,\Mate) \cong \Gamma$. By Lemma~\ref{lemma:yo}, there is a natural action of $G$ as an dart-transitive group of symmetry of $X$ and $\Mate$ is $G$-invariant.
\end{proof}

\section{The tetravalent case}

 For the rest of this paper, we consider the case in which $\Gamma$ is tetravalent.   In order to understand the BGCG construction, we first consider the dissection.  Let $\Gamma$ be a simple tetravalent $G$-bi-transitive $2$-colored graph  and let 
$\Delta(v)$ be  a block of imprimitivity for $G_v^{\Gamma(v)}$, for some white vertex $v$.
Let $\Delta = \Pairs(\Gamma, G, \Delta(v))$, let
$X = \Dis(\Gamma,\Delta)$ and let $\Mate = \Mat(\Gamma,\Delta)$.
Note that, in view of Theorem~\ref{theo:equivalent}, $\Gamma$ can be reconstructed as $\BGCG(X,\Mate)$.

Recall also that $X$ might be disconnected. Since $X$ is dart-transitive, connected components of $X$ are all isomorphic to
a fixed dart-transitive graph, $B$, called the {\it base graph} of $\Gamma$  (with respect to $G$ and $\Delta(v)$); hence $X\cong kB$ for some positive integer $k$.

The {\it connection graph} $C$ of $\Gamma$ (also with respect to $G$ and $\Delta(v)$), is the graph whose vertices are the connected components of $X$
and two such components $B_1$, $B_2$ are adjacent in $C$ whenever an edge from $B_1$ is $\Mate$-related to some edge from $B_2$.
Note that the connection graph $C$ is simple by definition, even when component $B_1$  has many edges $\Mate$-related to edges in $B_2$ .

Let us emphasize that the base graph and the connection graph really do depend on the choice of the system of imprimitivity $\Delta(v)$ and not just on $\Gamma$ and $G$.
 For example, if $G_v^{\Gamma(v)} \cong \ZZ_2^2$, then any partition of $\Gamma(v)$ into blocks of size $2$
is a system of imprimitivity for $G_v^{\Gamma(v)}$, and different partitions might give rise to different splits $\Delta$ and thus to different
graphs $X$. This phenomenon can be observed  in the examples considering the wreath graph $\Gamma=\W(6,2)$  in section \ref{DiS+BGCG}.

Now, we consider the BGCG {\it construction}.  That is, given a tetravalent, dart-transitive graph $B$ and a dart-transitive graph $C$ of order $k$, we would like to find every separating relation $M$ on $kB$ such that $\BGCG(kB,M)$ is an edge-transitive graph with connection graph isomorphic to $C$. This is our long-term goal.

Examining the Census \cite{C4} shows many examples of graphs which can be constructed this way; or we could say that these graphs admit dissections.   We have seen in Examples
  \ref{ex:first} and \ref{ex:second} constructions in which the base graph $B$ is $K_5$ and the connection graphs are $K_1$ and $K_2$ respectively.     Continuing these examples, we see in \cite{C4} graphs constructed from $B = K_5$   and $ C = \CC_n$ (for $n \in \{3,\ldots,12\}), K_6, K_{5,5}, K_{11},  K_{5,5,5}$  and others.  Further, there are constructions using base graphs $B = K_{2,2,2}$ (which is the skeleton of the Octahedron), $K_{4,4}$, the graph $\CC_3\Box\CC_3$ shown in Figure \ref{fig:kappa} below, and in fact almost all small tetravalent graphs. 

 For the rest of this paper, we will concentrate on the case when   $C=\K_1$ or $\K_2$, that is, those for which the  graph $X$ consists of one or two copies of $B$.

\subsection{The connection graph is $\K_1$}
\label{sec:K1}

The simplest case arises when the connection graph of $\Gamma$ (with respect to some $G$ and $\Delta(v)$) is $\K_1$.
Then $\Gamma$ is isomorphic to $\BGCG(X,\Mate)$ for some {\it connected} tetravalent $G$-dart-transitive graph $X$ (which, being connected, equals
the corresponding base graph $B$) and
a $G$-invariant separating relation $\Mate$ for $X=B$ such that each $\Mate$-equivalence class has size $2$. We thus introduce the following terminology.

\begin{definition}
A {\it dart-transitive pairing} of the edges of a tetravalent graph $B$ is a separating relation $\Pp$ on $\E(B)$ with equivalence classes, called {\it pairs}, of size $2$, such that the group of all symmetries of $B$ that preserve $\Pp$ (this group will be denoted by $\Aut(B,\Pp)$) is transitive on darts of $B$.
\end{definition}

Note that when $B$ is simple, then any dart-transitive relation with sets of size 2 is a separating relation, and so is a dart-transitive pairing.  We summarize the discussion above in the following theorem:
\begin{theorem}
Let $B$ be a tetravalent graph and let $\Pp$ be a separating relation for $B$. Then $\Pp$ is a dart-transitive pairing if and only if $\BGCG(B, \Pp)$ is edge-transitive.
\end{theorem}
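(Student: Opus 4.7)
The forward direction will follow by setting $G=\Aut(B,\Pp)$ and invoking Theorem~\ref{theo:equivalent2}: by hypothesis $G$ acts dart-transitively on $B$ and preserves the separating relation $\Pp$, whose equivalence classes have size~$2$, so Theorem~\ref{theo:equivalent2} (applied with $k=4$ and $d=2$) immediately yields that $\BGCG(B,\Pp)$ is $G$-bi-transitive, and in particular edge-transitive.

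For the converse, suppose $\Gamma=\BGCG(B,\Pp)$ is edge-transitive, and let $\Delta=\Split(B,\Pp)$ be the natural split at white vertices produced by Construction~\ref{cons:BGCG}. My strategy is to produce a color-preserving subgroup $G\le\Aut(\Gamma)$ that is both edge-transitive on $\Gamma$ (so that $\Gamma$ is $G$-bi-transitive) and $\Delta$-invariant, and then apply Lemma~\ref{lemma:yo} to $(\Gamma,G,\Delta)$. That lemma will yield an action of $G$ on $B=\Dis(\Gamma,\Delta)$ as a dart-transitive symmetry group preserving $\Pp=\Mat(\Gamma,\Delta)$; in particular $G\le\Aut(B,\Pp)$, so $\Aut(B,\Pp)$ will be dart-transitive on $B$ and hence $\Pp$ will be a dart-transitive pairing.

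To produce $G$, I would first use the fact recalled in the introduction applied to $\Aut(\Gamma)$, together with the standard observation that the color-preserving subgroup of an edge-transitive connected bipartite graph is itself edge-transitive, to conclude that $H=\Aut(\Gamma)^+$ is bi-transitive on $\Gamma$. I would then set $G=H\cap\Aut(\Gamma,\Delta)$. The local stabilizer satisfies $G_v^{\Gamma(v)}=H_v^{\Gamma(v)}\cap D_8$, where $D_8\le\Sym(\Gamma(v))$ fixes the pair-partition $\Delta(v)$; a case analysis of the five transitive subgroups of $S_4$ shows that $G_v$ still acts transitively on $\Gamma(v)$. Combined with global transitivity of $G$ on white vertices, this will give bi-transitivity of $G$. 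The main obstacle I anticipate is exactly this global transitivity: while the local intersection-with-$D_8$ step is a routine check, ensuring that the restriction from $H$ to its $\Delta$-stabilizer does not fragment the white-vertex orbit requires a more delicate orbit-counting argument, comparing the $H$-orbit of $\Delta$ among all global splits with the $H_v$-orbit of $\Delta(v)$ among the three local splits at $v$.
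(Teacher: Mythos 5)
Your forward direction is fine and is exactly what the paper intends: with $G=\Aut(B,\Pp)$, a dart-transitive pairing is a $G$-invariant separating relation with classes of size $2$ and $G$ is dart-transitive, so Theorem~\ref{theo:equivalent2} gives that $\BGCG(B,\Pp)$ is $G$-bi-transitive, hence edge-transitive. (The paper prints no proof of this theorem; it is announced as a summary of the discussion around Theorem~\ref{theo:equivalent2}, Lemma~\ref{lemma:yo} and Theorem~\ref{theo:equivalent}.)

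The converse is where your proposal has a genuine gap, and it is precisely the step you flag. To invoke Lemma~\ref{lemma:yo} you must produce a color-preserving subgroup of $\Aut(\Gamma)$ that is edge-transitive \emph{and} preserves the split $\Delta=\Split(B,\Pp)$, and neither of your two reductions is established. First, the ``standard observation'' that the color-preserving subgroup $H$ of a connected edge-transitive bipartite graph is edge-transitive is false in general: if no automorphism reverses an edge (as happens in bipartite half-arc-transitive graphs), the edge-stabilizer is color-preserving and $H$ splits the edge set into two orbits; you offer no argument special to $\Gamma=\BGCG(B,\Pp)$. Second, and more seriously, the passage from $H$ to $G=H\cap\Aut(\Gamma,\Delta)$ is the entire content of the converse and is left open: you concede that transitivity of $G$ on white vertices is unproved, and the local reduction is also not correct as stated, since $G_v^{\Gamma(v)}$ is in general only \emph{contained in} $H_v^{\Gamma(v)}\cap \D_4$ (an element of $H_v$ may preserve $\Delta(v)$ while destroying $\Delta$ at another white vertex), and even that intersection need not be transitive: if $H_v^{\Gamma(v)}\cong\ZZ_4$ and $\Delta(v)$ is not its invariant pair-partition, the intersection is generated by a double transposition and has two orbits on $\Gamma(v)$. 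A priori the edge-transitivity of $\Gamma$ could be carried entirely by the ``unexpected'' symmetries mentioned after Lemma~\ref{lemma:yo1}, which respect neither the coloring nor $\Delta$; what the paper's lemmas actually yield is that if \emph{some} color-preserving, $\Delta$-invariant group is edge-transitive on $\Gamma$, then $\Aut(B,\Pp)$ is dart-transitive, and the theorem's converse is implicitly meant in that structured sense. (You also tacitly assume the $\Pp$-classes have size $2$ in the converse---white vertices of valency $4$, the dihedral stabilizer, etc.---whereas the hypothesis is only that $\Pp$ is separating; deducing the class size from edge-transitivity is a further point your argument does not address.)
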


To illustrate the concept of a dart-transitive pairing, we return to Example~\ref{ex:first}.

\begin{example}\label{example:later}
Let $B$ be the complete graph on $\ZZ_5$. Assign the color $i+j \in \ZZ_5$ to the edge  $\{i, j\}$ of $B$. Note that each color is assigned to exactly two edges.  Let $\Pp$ be the equivalence relation on $\E(B)$ of having the same color. It is not too hard to see that $\Aut(B,\Pp)$ contains the affine functions over $\ZZ_5$. In particular, $\Aut(B,\Pp)$ is transitive on darts, and thus $\Pp$ is a dart-transitive pairing of $B$. 

We can now construct the graph $\BGCG(B,\Pp)$, which happens to be isomorphic to the circulant graph $\CC_{10}(1, 3)$.  In fact, this example is simply the inverse of the dissection considered in Example~\ref{ex:first}.
\end{example}
\begin{example}\label{ex:c3c3}

Let $B=\CC_3\Box\CC_3$, that is, $B$ is the cartesian product of two $3$-cycles. The graph $B$  can be visualized as the skeleton of the map $\{4, 4\}_{3, 0}$, as shown in Figure \ref{fig:kappa}.

\begin{figure}[hhh]
\centering
\includegraphics[height=40mm]{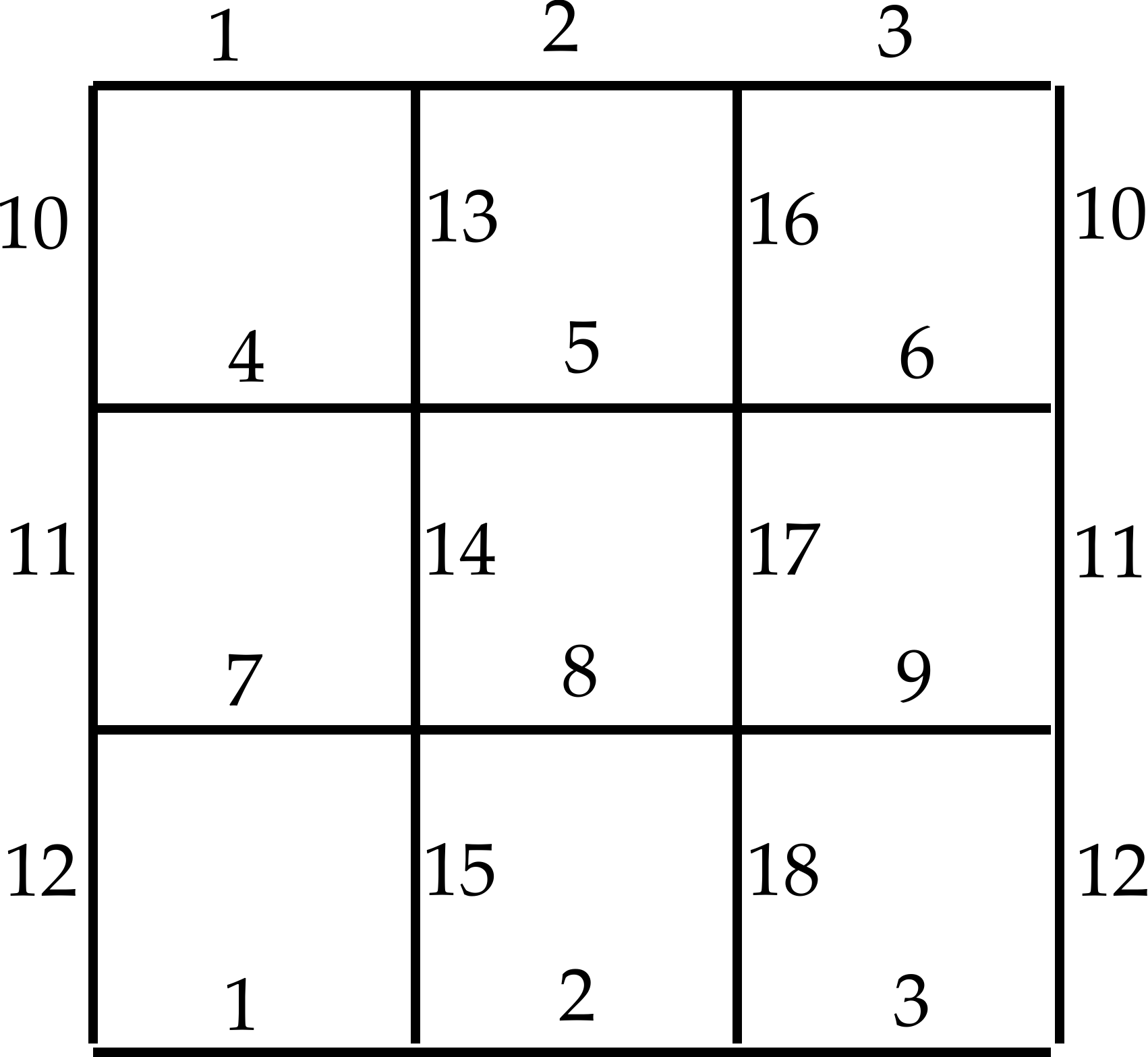}
\caption{The map $\{4, 4\}_{3, 0}$ 
}\label{fig:kappa}
\end{figure}

The group $H$ of orientation-preserving symmetries of the map is generated by the rotation about the central face $$R = (1~12~3~10)(4~15~9~16)(5~14~8~17)(6~13~7~18)(2~11)$$
 and  the rotation about the lower right corner of the central face $$ S = (1~13~4~10)(2~14~6~12)(3~15~5~11)(7~16)(8~17~9~18).$$ 

The group $H$ is transitive on the darts of the graph.  Consider this pairing: $$\Pp=\{\{1, 17\}, \{2, 11\}, \{3, 14\}, \{4, 18\}, \{5, 12\}, \{6, 15\}, \{7, 16\}, \{8,10\}, \{9, 13\}\}.$$  For each edge $e$, the stabilizer $H_e$ fixes exactly one other edge $e'$; $\Pp$ is the collection of all pairs of the form $\{e, e'\}$. Then $\Pp$ is preserved by $H$ and thus is a dart-transitive pairing of $B$.   Therefore, $\BGCG(B, \Pp)$ is a bi-transitive graph on 18 vertices; it is the skeleton of the toroidal map $\{4, 4\}_{3, 3}$.

\end{example}

Because these  pairings are so important, a natural question now arises:
\begin{question}
\label{q:cc}
Given a connected tetravalent dart-transitive graph $B$, how can one efficiently find all dart-transitive pairings of $B$, up to conjugacy in $\Aut(B)$?
\end{question}

In general, all dart-transitive pairings of a given dart-transitive graph $B$ can be obtained by using the following approach:
\begin{enumerate}
\item
Find all minimal dart-transitive subgroup $G\le \Aut(B)$ (up to conjugacy in $\Aut(B)$);
\item
For each minimal dart-transitive subgroup $G$, find all systems of imprimitivity for the action of $G$ on the edges of $B$ with blocks of size $2$;
\item
Reduce the set of imprimitivity systems modulo conjugacy in $\Aut(B)$.
\end{enumerate}

For groups in which the stabilizer of a dart has ``small'' order (less than 50, say), this approach is efficient.   Applying it to graphs of small size and with small vertex-stabilizer yields a variety of dart-transitive pairings; see examples in  Table~\ref{Table}, near the end of this paper.

The difficulty in implementing the algorithm might occur in step 1; namely, if $\Aut(B)$ has large order, then finding dart-transitive subgroups of $\Aut(B)$ might take an unreasonably long time. 
We first address Question~\ref{q:cc} in some special cases. Suppose  that the graph $B$ is not simple. Then it is not too hard to see that $B$ is either the {\it bouquet}, that is, a vertex with two loops attached to it, or the dipole with $4$ parallel edges, or a doubled cycle. The bouquet and the dipole  cannot have  separating relations, and so neither has a  dart-transitive pairing.  A doubled cycle has a dart-transitive pairing if and only if its length is even, in which case it has, up to isomorphism, a unique one (pairing diametrically opposed edges). Applying the BGCG construction to this pairing yields a wreath graph. 

Fortunately, connected simple tetravalent dart-transitive graphs with large dart-stabilizers are much better understood than they used to be.  In~\cite{PSV4valent}, it was shown that  there exists a sublinear function $f$ such that, apart from a certain exceptional family, the dart-stabilizer of a simple tetravalent graph of order $n$ has order at most $f(n)$. The exceptional family of graphs was first described in~\cite{PX}, and the ones with the largest vertex-stabilizers are also the simplest members: the wreath graphs, $\W(n, 2)$.   We  consider them extensively in  the next section.

\section{Wreath Graphs and their Dart-transitive pairings}\label{ex:wreath}
In this section, we consider the troublesome family $\W(n,2)$ of graphs, determine all of their dart-transitive pairings and the graphs  resulting from applying the BGCG construction to them.

Consider the graph $\Gamma = \W(n,2)$. If $n=4$, then $\W(n,2)$ is isomorphic to the complete bipartite graph $\K_{4,4}$, and thus its symmetry group is isomorphic to $\Sym(4)\wr \Sym(2)$.  For all other values of $n$, the collection of pairs of the form $\{(i,0),(i,1)\}, i\in \ZZ_n$, forms a system of imprimitivity for $\Aut(\W(n,2))$. In particular, $\Aut(\W(n,2))$ is generated by symmetries $\rho, \mu$, and $\tau_i$ for $i\in \ZZ_n$, where $(i,j)^\rho  = (i+1, j)$, $(i,j)^\mu = (-i, j)$ for all $i\in \ZZ_n$, $j\in \ZZ_2$, and where $\tau_i$ interchanges $(i,0)$ and $(i,1)$ leaving all other vertices fixed. Thus,  the symmetry group $\Aut(W(n,2)$ is isomorphic to $\ZZ_2\wr\D_n$ and so its order is $n\cdot 2^{n+1}$.

We will find it useful to have labels for the edges of $\W(n,2)$ themselves.  Let $a_i = \{(i, 0), (i+1, 0)\}$, $b_i = \{(i, 0), (i+1, 1)\}$, $c_i = \{(i, 1), (i+1, 1)\}$, and $d_i = \{(i, 1), (i+1, 0)\}$, as in Figure~\ref{fig:handy}.  Finally, we let $C_i = \{a_i, b_i, c_i, d_i\}$.

\begin{figure}[hhh]
\begin{center}
\includegraphics[height=22mm]{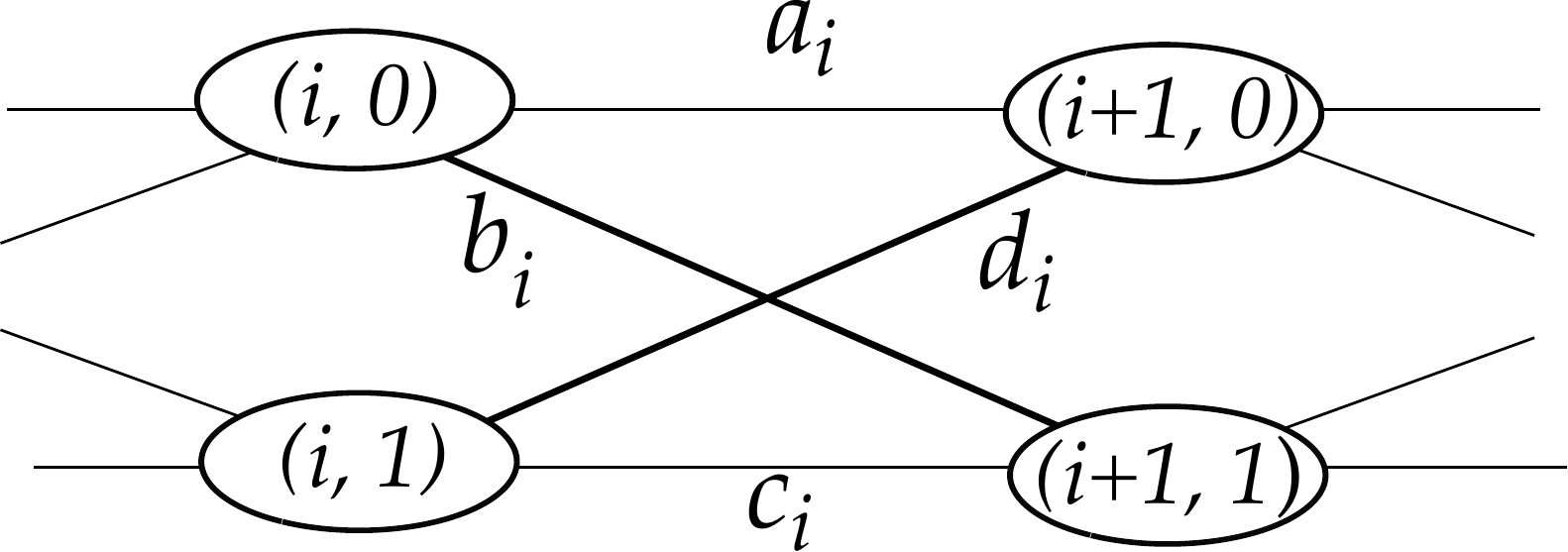}
\caption{Labels for edges of $\W(n,2)$}
\label{fig:handy}
\end{center}
\end{figure}

First, if $\Pp$ is a pairing on $\Gamma$ and if  $G$, the group  of symmetries preserving the pairing, is to be dart-transitive then $G$ must contain:
\begin{enumerate}
\item A symmetry which sends each pair $\{(i,0), (i, 1)\}$ to $\{(i+1,0), (i+1, 1)\}$.  Without loss of generality, we can assume this symmetry $\rho^*$ is $\rho$ or $\rho' = \rho\tau_0$.
\item A symmetry which sends each pair $\{(i,0), (i, 1)\}$ to $\{(-i,0), (-i, 1)\}$.   This $\mu^*$ must be $\mu$ times some product of the $\tau_i$'s.
\item A symmetry $\tau^*$ which fixes $(0,0)$ and interchanges $(1,0)$ with $(1, 1)$.  Then $\tau^*$ must itself be a product of the $\tau_i$'s.

\end{enumerate}

In searching for dart-transitive pairings, we first note the obvious one, labelled `$\Pp_0$' in Table \ref{Table2}, in which each $a_i$ is paired with $c_i$, and each $b_i$ with $d_i$.  This is invariant under all of $\Aut(\Gamma)$.   

Any other pairing must pair each edge in $C_i$ with some edge in $C_{i+m}$ where $n$ is even and equal to $2m$.  We can indicate the 4 pairs within $C_i\cup C_{i+m}$ using labels $[1,i], [2, i], [3,i], [4,i]$.  We can assume without any loss of generality that the edges $a_0, b_0, c_0, d_0$ belong to pairs $[1,0], [2, 0], [3,0], [4,0]$ respectively, and that for $i = 0, 1, \dots, m-2$, $[c, i]\rho^* = [c, i+1]$ for all $c\in \{1, 2, 3 ,4\}$.   Then for some $\sigma\in D_4$, we must have $[c, m-1]\rho^* = [c\sigma, 0]$ for all $c\in \{1, 2, 3 ,4\}$, and this $\sigma$ (together with the choice of $\rho^*$) completely determines the pairing.

\begin{table}[hhh]
\begin{center}
\begin{tabular}{|c|c|c|c|c|c|c|l|l|}
\hline
$\Pp_i$  &  $\sigma$ &$m$ &  $\rho^*$  &  $\mu^*$  &  $\tau^*$  &$|G|$&  $K_1$  &  $K_2$\\
\hline
 
$\Pp_0$&-& -&$\rho$  &  $\mu$  &  $\tau_1$  &  $2n2^n$  &  $\W(2n,2)$  &  $\SDD(\W(n,2))$\\

\hline
$\Pp_1$&Id& any&$\rho$  &  $\mu$  &  $\tau_1\tau_{m+1}$  &  $4m2^m$  &  $\SDD(\W(m,2)$  &  $\SDD(\W(n,2))$\\
\hline
$\Pp_2$&$(2  4)$&odd&$\rho$  &  $\mu$  &  $\tau_1\tau_3\tau_5\dots\tau_{n-1}$  &  $16m$  &  $\{4, 4\}_{[m,4]}$  &  $\{4, 4\}_{[n,4]}$  \\
\hline
$\Pp_3$&$(1   2)(3  4)$&any&$\rho$  &  $\mu\tau_1\tau_2\tau_3\dots\tau_{m}$  &  $\tau_1\tau_{m+1}$  &   $4m2^m$  &  $\PX(n,2)$  &  $\PX(2n,2)$  \\
\hline
$\Pp_4$&$(1   2  3  4)$&odd&$\rho\tau_0$  &  $\mu\tau_2\tau_4\tau_6\dots\tau_{n-2}$  &  $\tau_1\tau_3\tau_5\dots\tau_{n-1}$  &   $16m$  &  $\C_{8m}(1,2m+1)$  &  $\{4, 4\}_{[n,4]}$ \\
\hline
$\Pp_5$&$(4  3  2  1)$&odd&$\rho\tau_0$  &  $\mu\tau_2\tau_4\tau_6\dots\tau_{n-2}$  &  $\tau_1\tau_3\tau_5\dots\tau_{n-1}$  &   $16m$  &  $\C_{8m}(1,2m-1)$  &  $\{4, 4\}_{[n,4]}$ \\

\hline
$\Pp_6$&$(1  3)(2  4)$& any&$\rho$  &  $\mu$  &  $\tau_1\tau_{m+1}$  &  $4m2^m$  &  $\SDD(\W(m,2)$  &  $\SDD(\W(n,2))$\\
\hline
$\Pp_7$&$(1  3)$&odd&$\rho$  &  $\mu$  &  $\tau_1\tau_3\tau_5\dots\tau_{n-1}$  &  $16m$  &  $\{4, 4\}_{[m,4]}$  &  $\{4, 4\}_{[n,4]}$  \\
\hline
$\Pp_8$&$(1   4)(2  3)$&any&$\rho$  &  $\mu\tau_1\tau_2\tau_3\dots\tau_{m}$  &  $\tau_1\tau_{m+1}$  &   $4m2^m$  &  $\PX(n,2)$  &  $\PX(2n,2)$  \\
\hline
\end{tabular}
\caption{All dart-transitive pairings in wreath graphs}\label{Table2}
\end{center}
\end{table}

All eight of these pairings are dart-transitive; the requisite symmetries $\rho^*, \mu^*, \tau^*$,  as well as the size of the group $G$ that they generate,  are given in Table \ref{Table2}.   Not all of them are distinct: pairings 6, 7, 8 are isomorphic to pairings 1, 2, 3, respectively.   In each case, the isomorphism is given by the symmetry $\tau_1\tau_2\tau_3\dots\tau_{m}$.

The  column labelled `$K_1$' gives the result of the construction $\BGCG(W(n, 2), K_1, \Pp_i)$.  We provide some notation for these entries: \begin{enumerate}
\item $\PX(n,k)$,  introduced in \cite{PX}, has vertex set $\ZZ_n\times\ZZ_2^k$, and its edges are all $\{(i, jx), (i+1, xj')$where $j$ and $j'$ are bits and $x$ is a bitstring of length $k-1$.  $\PX(n,1)$ is isomorphic to the wreath graph $\W(n,2)$, and $\PX(n,2)$ is isomorphic to the rose window graph $\R_{2n}(n+2, n+1)$; see \cite{Rose}.
\item The graph $\{4, 4\}_{[b,c]}$ is the skeleton of the map formed from the tessellation $\{4, 4\}$ of the plane into squares meeting 4 at each vertex by factoring out the translation group $T_1$ generated by translations$(b, b)$ and $(-c, c)$.
\item The graph $\{4, 4\}_{<b,c>}$is similarly formed, using the group $T_2$ generated by $(b, c)$ and $(c, b)$
\end{enumerate}
  In Table \ref{Table2}, we    avoid some separation into cases by using these facts:\begin{enumerate}
\item If $m \equiv 1$ (mod 4), then $\{4, 4\}_{[4, m]} \cong  C_{8m}(1, 2m-1)$ and $\{4, 4\}_{<m+2, m-2>} \cong C_{8m}(1, 2m+1)$
\item If $m \equiv 3$ (mod 4), then $\{4, 4\}_{[4, m]} \cong  C_{8m}(1, 2m+1)$ and $\{4, 4\}_{<m+2, m-2>} \cong C_{8m}(1, 2m-1)$
\end{enumerate}
 The final column of the table will be referred to in the next section.

\section{The connection graph is $\K_2$}
\label{sec:K2}

Suppose now that the connection graph of $\Gamma$ (with respect to some $G$ and $\Delta(v)$) is $\K_2$.
If $B$ is the corresponding base graph, then $\Gamma$ is isomorphic to $\BGCG(X,\Mate)$ 
for some $G$-invariant separating relation $\Mate$ on a graph $X$ having exactly two components, each isomorphic to $B$.

Let $B_0$ and $B_1$ be the components of $X$ and let $\phi_0:B \rightarrow B_0$ and $\phi_1:B \rightarrow B_1$ be isomorphisms. Given $x$ an edge or vertex of $B$ and $i\in\{0,1\}$, we give the label $(x, i)$ to $x^{\phi_i}$.

Since the relation $\Mate$ on $\E(X)$ is invariant under the dart-transitive group $G$ and since $\{\E(B_0),\E(B_1)\}$
is clearly a $G$-invariant partition of $\E(X)$, it follows that either each equivalence class of $\Mate$ is
contained in one of $\E(B_0)$ and $\E(B_1)$, or each equivalence class of $\Mate$ intersects both $\E(B_0)$ and $\E(B_1)$.
In the former case, the corresponding graph $\BGCG(X,\Mate) \cong \Gamma$ is disconnected,
which we have ruled out. Hence the latter possibility occurs. This allows us to define a permutation $\kappa$ on the set $\E(B)$
such that the equivalence classes of $M$ are of the form $\{(e,0), (e^\kappa,1)\}$.

To be explicit, if we denote by $e$ the white vertex $\{(e, 0), (e^\kappa, 1)\}$, then  $(u,0)$ is adjacent to $e$ if $u\in e$ and $(v,1)$ is adjacent to $e$ if $v \in e^\kappa$.

Conversely, given the graph $B$ and  a permutation $\kappa$ on $\E(B)$, let $2B$ be the disjoint union of two copies of $B$ indexed by $\{0,1\}$.  One can then define a relation $M$ on $\E(2B)$ by letting the edge $(e, 0)$ be $M$-related to
$(e^\kappa, 1)$. This relation $M$ is a separating relation on $2B$. We then define $\BGCG(B, \K_2, \kappa)$ to be $\BGCG(2B, \Mate)$.

\begin{example} If $\kappa$ is the identity permutation then $\BGCG(B,\K_2,\kappa)\cong\SDD(B)$.
\end{example}

\begin{lemma}\label{newlabel}
 Let $B$ be a connected tetravalent simple graph. Let $A=\Aut(B)$ and let $\kappa$ and $\kappa'$ be permutations on $\E(B)$ such that, when $A$ is viewed as a permutation group on $\E(B)$, we have $\kappa' \in A\kappa A$. Then  $\BGCG(B,\K_2,\kappa)\cong \BGCG(B,\K_2,\kappa')$.
\end{lemma}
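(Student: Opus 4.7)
The plan is to exploit the double-coset decomposition directly. Since $A$ is a group, we may write $\kappa' = \alpha^{-1}\kappa\beta$ for some $\alpha,\beta \in A$ (with $A$ viewed as a permutation group on $\E(B)$). I will then construct an explicit isomorphism by ``twisting'' one copy of $B$ by $\alpha$ and the other by $\beta$, and carrying the white vertices along by $\alpha$.

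More concretely, I would begin by labeling the black vertices of $\BGCG(B,\K_2,\kappa)$ as $(v,0)$ and $(v,1)$ for $v \in \V(B)$, and the white vertices by the edges $e \in \E(B)$, so that the white vertex $e$ is adjacent to $(u,0)$ for each $u \in e$ and to $(w,1)$ for each $w \in e^\kappa$, and similarly for $\BGCG(B,\K_2,\kappa')$ using $\kappa'$ in place of $\kappa$. The candidate isomorphism $\phi\colon \BGCG(B,\K_2,\kappa) \to \BGCG(B,\K_2,\kappa')$ would then be given by $\phi(v,0)=(v^\alpha,0)$, $\phi(v,1)=(v^\beta,1)$, and $\phi(e)=e^\alpha$, using the natural actions of $\alpha,\beta \in \Aut(B)$ on both $\V(B)$ and $\E(B)$. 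Since $\alpha,\beta$ are bijections, $\phi$ is a bijection of vertex sets preserving color.

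The verification of adjacency then reduces to a short computation. Adjacencies of the form $(u,0) \sim e$ transport automatically, since $\alpha$ is a graph automorphism and hence $u \in e$ implies $u^\alpha \in e^\alpha$. The only nontrivial case concerns adjacencies $(v,1) \sim e$, i.e., $v \in e^\kappa$; for these one must show $v^\beta \in (e^\alpha)^{\kappa'}$. But the relation $\kappa'=\alpha^{-1}\kappa\beta$ gives
\[
(e^\alpha)^{\kappa'} \;=\; (e^\alpha)^{\alpha^{-1}\kappa\beta} \;=\; e^{\kappa\beta} \;=\; (e^\kappa)^\beta,
\]
so $v \in e^\kappa$ implies $v^\beta \in (e^\alpha)^{\kappa'}$, as required. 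Applying the same construction with $(\alpha^{-1},\beta^{-1})$ produces an inverse, so $\phi$ is an isomorphism.

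I do not expect any serious obstacle. The only subtle point is keeping track of which handedness of the double-coset decomposition ($\alpha^{-1}\kappa\beta$ versus $\alpha\kappa\beta$) matches the natural definition of $\phi$; since $A$ is closed under inversion both forms generate the same set $A\kappa A$, so one is free to pick the convenient one. Conceptually the statement merely formalizes the fact that the two copies $B_0, B_1$ may be independently relabeled by automorphisms of $B$ without affecting the isomorphism type of the resulting bipartite graph.
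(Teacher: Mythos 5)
Your proposal is correct and follows essentially the same route as the paper's proof: the paper also writes $\kappa'=\gamma^{-1}\kappa\gamma'$ and defines the isomorphism by $(u,0)\mapsto(u^{\gamma},0)$, $(v,1)\mapsto(v^{\gamma'},1)$, $e\mapsto e^{\gamma}$, verifying adjacency via $(e^{\kappa})^{\gamma'}=(e^{\gamma})^{\kappa'}$, which is exactly your computation with $\alpha,\beta$ in place of $\gamma,\gamma'$.
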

\begin{proof} 
Let $\Gamma=\BGCG(B,\K_2,\kappa)$ and $\Gamma'=\BGCG(B,\K_2,\kappa')$ and let $\gamma,\gamma'\in A$ such that, as permutations of $\E(B)$, we have $\kappa' =\gamma^{-1}\kappa\gamma'$, so that $\kappa\gamma'= \gamma\kappa'$.

Let $\phi$ be the mapping from $\V(\Gamma)$ to $\V(\Gamma')$ defined by $(u,0)^\phi=(u^{\gamma},0)$, $(v,1)^\phi=(v^{\gamma'},1)$, for $v\in\V(B)$ and $e^{\phi}=e^{\gamma}$ for $e\in\E(B)$.  Suppose that $(u, 0)$ and $(v, 1)$ are adjacent to $e$.   Since $v \in e^{\kappa}$, it follows that  $v^{\gamma'} \in (e^{\kappa})^{\gamma'} = e^{\gamma\kappa'} = (e^{\gamma})^{\kappa'} $.  Thus $\phi$ is an isomorphism.
\end{proof}

We will say that $\kappa$ is a {\it push} of $B$ provided that the corresponding partition of edges given by $\Pp = \{\{(e, 0), (e^\kappa, 1)\} \mid e \in \E(B)\}$ is a dart-transitive pairing of the graph $2B$, in other words, if $\Aut(2B, \Pp)$ is transitive on darts of $2B$.

The idea of a dart-transitive pairing, which is essential in the case when $X$ is connected, is also useful here in the 2-component case: given a dart-transitive pairing $\Pp$ of $B$, we construct a push $\kappa_\Pp$ of $2B$
by letting $e^{\kappa_\Pp}$ be the other edge of $B$ in the same $\Pp$-class as $e$. (Note that such a push $\kappa_\Pp$ is an involution without fixed points.)
It is not hard to see that, if $B$ is connected, then so is $\BGCG(B,\K_2,\kappa_\Pp)$. 

The final column of Table \ref{Table2} shows the result of  $\BGCG(W(n,2),\K_2, \kappa_{\Pp_i})$ for each dart-transitive pairing $\Pp_i$.

A further curious fact:  for even values of $m$, even though the pairings $\Pp_4$ and $\Pp_5$ are not dart-transitive, the graphs $\BGCG(W(2m,2), K_2, \Pp_4)$ and $\BGCG(W(2m,2), K_2, \Pp_5)$ are edge-transitive; both are isomorphic to $\{4, 4\}_{<n+2, n-2>}$.

In Section~\ref{sec:last}, we will consider graphs obtained via dart-transitive pairings of edges in small graphs other than wreath graphs. Not every push comes from a dart-transitive pairing, though, and so we need to consider pushes of $X$ more generally.

We now prove two results which determine what conditions on $B$ and a permutation $\kappa$ of $\E(B)$
will insure that $\kappa$ is a push.

\begin{theorem}
\label{th:K2}
Let $B$ be a connected tetravalent dart-transitive simple graph and let $\kappa$ be a permutation of $\E(B)$ satisfying these two conditions:
\begin{enumerate}
\item there exist dart-transitive subgroups $G_0$ and $G_1$ of $\Aut(B)$ such that, when viewed as permutation groups on $\E(B)$, we have $(G_0)^\kappa=G_1$;
\item there exist $\tau_0,\tau_1\in\Aut(B)$ such that, when viewed as permutations on $\E(B)$, we have $\tau_0=\kappa\tau_1\kappa$.
\end{enumerate}
Then $\kappa$ is a push  of $B$.  Conversely, if $\kappa$ is a push of $B$ then it satisfies these conditions.
\end{theorem}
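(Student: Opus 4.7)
The plan is to analyse the group $K := \Aut(2B, \Pp)$ explicitly, where $\Pp = \{\{(e,0),(e^\kappa,1)\} : e \in \E(B)\}$, and to show that dart-transitivity of $K$ on $2B$ is equivalent to the conjunction of conditions (1) and (2). First I would split $K$ into its color-preserving subgroup $H$ (those elements stabilising the set $\{B_0,B_1\}$ pointwise) and the color-swapping coset $K\setminus H$. A color-preserving symmetry is naturally a pair $(\alpha_0, \alpha_1) \in \Aut(B) \times \Aut(B)$, where $\alpha_i$ acts on $B_i$; applying such a pair to $\{(e,0),(e^\kappa,1)\}$ and asking for the image to lie in $\Pp$ gives, via a one-line computation, the condition $\alpha_0 \kappa = \kappa \alpha_1$ as permutations of $\E(B)$. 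Similarly, a color-swapping symmetry corresponds to a pair $(\beta_0, \beta_1)\in\Aut(B)\times\Aut(B)$ (acting by $(v,0)\mapsto (v^{\beta_0},1)$, $(v,1)\mapsto (v^{\beta_1},0)$) and preserves $\Pp$ iff $\beta_0 = \kappa \beta_1 \kappa$ on $\E(B)$. Since $B$ is connected, simple and tetravalent, $\Aut(B)$ acts faithfully on $\E(B)$, so the edge-action determines any such $\alpha_i$ or $\beta_i$ uniquely.

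For the forward direction, assume $K$ is dart-transitive on $2B$. Because $H$ setwise stabilises each component and two darts of $B_0$ cannot be mapped to one another by a color-swap, dart-transitivity of $K$ forces $H$ to act transitively on darts of $B_0$ and of $B_1$. The projections $G_0 := \pi_0(H)$ and $G_1 := \pi_1(H)$ are then dart-transitive subgroups of $\Aut(B)$, and the relation $\alpha_0 \kappa = \kappa \alpha_1$ for each $(\alpha_0,\alpha_1)\in H$ translates into $(G_0)^\kappa = G_1$ as permutation groups on $\E(B)$, yielding (1). Furthermore, dart-transitivity requires at least one element of $K$ to interchange $B_0$ and $B_1$, which by the color-swap description produces $\tau_0, \tau_1 \in \Aut(B)$ with $\tau_0 = \kappa\tau_1\kappa$, giving (2).

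Conversely, assume (1) and (2). Faithfulness of the edge-action makes the conjugation map $\alpha_0 \mapsto \kappa^{-1}\alpha_0\kappa$ a well-defined bijection $\phi: G_0 \to G_1$, and each pair $(\alpha_0, \phi(\alpha_0))$ extends to a color-preserving symmetry of $2B$ preserving $\Pp$. Collecting these yields a subgroup $H \le K$ that is dart-transitive on each component $B_i$ because $G_i$ is. The element $(\tau_0, \tau_1)$ from (2) now extends to a color-swapping element $\sigma \in K$, and the union $H \cup H\sigma$ acts transitively on all darts of $2B$, proving that $\kappa$ is a push.

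The only subtle point, though not a genuine obstacle, is the routine bookkeeping between viewing elements of $\Aut(B)$ as permutations of $\V(B)$ (to define symmetries of $2B$) and as permutations of $\E(B)$ (where the conditions (1) and (2) are phrased); the hypotheses that $B$ is connected, simple and tetravalent ensure that these two viewpoints correspond faithfully, which is what makes the translation between the abstract equation $\tau_0 = \kappa\tau_1\kappa$ and an actual symmetry of $2B$ legitimate.
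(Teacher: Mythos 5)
Your proposal is correct and follows essentially the same route as the paper's proof: both directions rest on decomposing the symmetries of $2B$ preserving $\Pp$ into component-preserving pairs $(\alpha_0,\alpha_1)$ satisfying $\alpha_0\kappa=\kappa\alpha_1$ on $\E(B)$ and component-swapping pairs satisfying $\tau_0=\kappa\tau_1\kappa$, constructing the dart-transitive group from $G_0$ conjugated by $\kappa$ together with one swapping element (the paper's $\bar{G}$ and $\beta$ are your $H$ and $\sigma$). The faithfulness of the edge action and the observation that darts within one component must be linked by component-preserving symmetries appear in the paper in the same roles.
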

\begin{proof}

We first show that if the conditions hold, then the pairing of edges of $2B$ induced by $\kappa$ (that is, the collection $\Pp$ of all pairs of the form $\{(0, e), (1, e^\kappa)\}$)
is invariant under some dart-transitive group of symmetries of $2B$.

In a simple graph, the action of the symmetry group on the vertices is faithful. If the graph is also connected and has at least two edges, then the action on edges is also faithful. This implies that, given $g\in G_0$, there exists a unique element $g^*\in G_1$ such that, when viewed as permutations of $\E(B)$, we have $g^*=g^\kappa=\kappa^{-1} g\kappa$.

For every $g \in G_0$, let $\bar{g}$ act on $2B$ by $(x,0)^{\bar{g}}=(x^g,0)$ and $(x,1)^{\bar{g}}=(x^{g^*},1)$, for every $x\in\V(X)\cup\E(X)$. Let $\bar{G}=\{\bar{g}\mid g\in G_0\}$. Clearly, $\bar{G}$ is closed under composition and is thus a subgroup of $\Aut(2B)$ preserving the two connected components of $2B$.  Let $e\in\E(B)$. Then $(e,0)$ is in the same pair as $(e^\kappa,1)$, but also, $(e,0)^{\bar{g}}=(e^g,0)$ is in the same pair as $(e^{g\kappa},1)=(e^{\kappa},1)^{\bar{g}}$. This shows that $\bar{G}$ preserves $\Pp$. Finally, note that, since $G_0$ and $G_1$ are both dart-transitive on $B$, the group $\bar{G}$ acts dart-transitively on each component. 

Let $\beta$ be the permutation of $\V(2B)\cup\E(2B)$ defined by $(x, 0)^\beta = ( x^{\tau_0},1 )$ and $(x,1)^\beta = (x^{\tau_1}, 0)$ for every $x\in\V(B)\cup\E(B)$. It is easy to check that $\beta$ is a  symmetry of $2B$ preserving $\Pp$ that exchanges the two components. It follows that the group generated by $\bar{G}$ and $\beta$ preserves $\Pp$ and is dart-transitive.   Thus $\kappa$ is a push   of $B$.

Conversely, we wish to show that any push $\kappa$ of $B$ satisfies the two conditions of the theorem. Let $M$ be the separating relation on $2B$ corresponding to $\kappa$. Since $\kappa$ is a push,  $M$ is invariant under some dart-transitive group $G$ of $2B$.  Let $G^+$ be the subgroup of $G$ which preserves $B_0$ (and so also $B_1$).   For each $g \in G^+$, define $g_0, g_1$ by $(x,0)^g = (x^{g_0}, 0), (x,1)^g = (x^{g_1}, 1)$.    Let $G_0 = \{g_0| g\in G^+\}$ and $G_1 = \{g_1| g\in G^+\}$.   Since $G$ preserves $M$, we have
  $\{(e,0), (e^\kappa, 1)\}^g =  \{(e^{g_0},0), (e^{\kappa g_1},1)\} \in M$, 
and hence, in the action on edges, $g_0\kappa = \kappa g_1$, and thus $G_0^\kappa = G_1$.  

	Now let $\beta$ be any element of $G$ which exchanges $B_0$ and $B_1$, and define $\tau_0, \tau_1$ by  $(x, 0)^\beta = (x^{\tau_0},1)$ and $(x, 1)^\beta = (x^{\tau_1},0)$ for every $x\in\V(B)\cup\E(B)$.   Since $G$ preserves $M$, each  $\{(e, 0), (e^\kappa,1)\}^\beta  =  \{(e^{\tau_0},1), (e^{\kappa\tau_1}, 0)\} \in M$, and so, in the action on edges, $\tau_0 = \kappa \tau_1\kappa$.
\end{proof}

\begin{corollary}
\label{th:K2c}
Let $B$ be a connected tetravalent dart-transitive simple graph, let $A=\Aut(B)$,
and let $\kappa$ be a permutation of $\E(B)$ satisfying these two properties:
\begin{enumerate}
\item[(1)] there exists some dart-transitive subgroup $H$ of $A$ such that, when viewed as a permutation group on $\E(B)$, $H$  is normalized by $\kappa$; and
\item[(2)] there exists $\tau\in A$, such that $\kappa^2 = \tau$, when $\tau$ is viewed as a permutation of $\E(B)$.
\end{enumerate}
Then $\kappa$ is a push  of $B$ and $\BGCG(B, K_2, \kappa)$ is bi-transitive.  In a converse direction, let $X=2B$, suppose that $\kappa'$ is a permuation of  $\E(B)$ and let $\Gamma =\BGCG(X, \kappa')$. If $\Gamma$ is connected and bi-transitive then the permutation $\kappa'$ is in the double coset $A\kappa A$ for some $\kappa$  which  satisfies these two conditions.
\end{corollary}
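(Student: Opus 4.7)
The plan is to deduce the forward direction directly from Theorem~\ref{th:K2} by specializing its parameters, and to deduce the converse by replacing $\kappa'$ with a carefully chosen representative of the double coset $A\kappa'A$. For the forward direction, set $G_0=G_1=H$, so that $(G_0)^\kappa=H^\kappa=H=G_1$ as permutation groups on $\E(B)$, and set $\tau_0=\tau$, $\tau_1=\id$, so that $\tau_0=\kappa\tau_1\kappa=\kappa^2=\tau$ as permutations of $\E(B)$. Both hypotheses of Theorem~\ref{th:K2} hold, so $\kappa$ is a push of $B$; Lemma~\ref{lemma:yo1} then asserts that $\BGCG(B,\K_2,\kappa)$ is bi-transitive under the dart-transitive group of $2B$ furnished by the push.

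For the converse direction, let $\Gamma=\BGCG(2B,\kappa')$ be connected and $G$-bi-transitive. The first step is to show that $\kappa'$ is itself a push. Let $\Delta_0$ be the split at the white vertices of $\Gamma$ inherited from the $\BGCG$ construction; at a white vertex $v$, the two $\Delta_0$-pairs are distinguished precisely by whether the black endvertex of the corresponding edge of $\Gamma$ lies in $\V(B_0)$ or $\V(B_1)$. Hence it suffices to verify that the bipartition $\{\V(B_0),\V(B_1)\}$ of the black vertices of $\Gamma$ is preserved setwise by every element of $G$. Once this is granted, Lemma~\ref{lemma:yo} promotes $G$ to a dart-transitive group of symmetries of $2B$ preserving $\Mate_{\kappa'}$, so $\kappa'$ is indeed a push.

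Invoking the converse part of Theorem~\ref{th:K2} then yields dart-transitive subgroups $G_0,G_1\le A$ with $(G_0)^{\kappa'}=G_1$ and elements $\tau_0,\tau_1\in A$ with $\tau_0=\kappa'\tau_1\kappa'$. Since $G$ is edge-transitive on $\Gamma$ while the black vertex set decomposes as $\V(B_0)\sqcup\V(B_1)$, some element $\beta\in G$ interchanges the two copies of $B$; a direct computation of $\beta g\beta^{-1}$ for $g$ in the component-preserving subgroup $G^+$, using the coordinate formulas already derived in the proof of Theorem~\ref{th:K2}, shows that $\tau_1G_0\tau_1^{-1}=G_1$. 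Combined with $\kappa'^{-1}G_0\kappa'=G_1$, this forces $(\kappa'\tau_1)^{-1}G_0(\kappa'\tau_1)=G_0$. Setting $\kappa=\kappa'\tau_1\in A\kappa'A$ therefore gives condition~(1) of the corollary with $H=G_0$, while condition~(2) follows from $\kappa^2=\kappa'\tau_1\kappa'\tau_1=\tau_0\tau_1\in A$, and $\kappa'=\kappa\tau_1^{-1}\in A\kappa A$.

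The main obstacle is the preliminary step of the converse, namely verifying that $G$ preserves the bipartition $\{\V(B_0),\V(B_1)\}$ of the black vertices of $\Gamma$, or equivalently that the natural split $\Delta_0$ is $G$-invariant. I expect this to follow from an intrinsic characterization of $\{\V(B_0),\V(B_1)\}$ as essentially the unique 2-coloring of the black vertices compatible with the $\BGCG$ structure, together with an analysis of the local action $G_v^{\Gamma(v)}$ showing that $\Delta_0(v)$ appears as a block system; the remaining ingredients are routine conjugation identities of the sort already employed in the proof of Theorem~\ref{th:K2}.
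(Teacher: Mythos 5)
Your forward direction is exactly the paper's: specialize Theorem~\ref{th:K2} with $G_0=G_1=H$, $\tau_0=\tau$, $\tau_1=\id$, and invoke Theorem~\ref{theo:equivalent2} (equivalently Lemma~\ref{lemma:yo1}) for bi-transitivity. The end-game of your converse is also sound and is essentially the paper's argument in different clothing: the paper re-chooses the labelling isomorphism $\phi_1=\phi_0\beta$, which forces $\tau_0=\id$, hence $\kappa^2=\tau_1^{-1}\in A$, and takes $H=A\cap A^{\kappa}$, whereas you keep the labelling, replace $\kappa'$ by $\kappa=\kappa'\tau_1$ and take $H=G_0$; these are the same manoeuvre of moving inside the double coset, and your identities $\tau_1 G_0\tau_1^{-1}=G_1$ and $\kappa^2=\tau_0\tau_1\in A$ do check out.

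The genuine gap is the step you yourself flag as the main obstacle: that the witnessing bi-transitive group $G$ of $\Gamma$ preserves the partition $\{\V(B_0),\V(B_1)\}$ of the black vertices (equivalently, that the natural split $\Delta_0$ is $G$-invariant), so that Lemma~\ref{lemma:yo} turns $G$ into a dart-transitive group of $2B$ preserving the pairing and $\kappa'$ becomes a push. The route you sketch --- an intrinsic characterization of $\{\V(B_0),\V(B_1)\}$ as the essentially unique $2$-coloring of the black vertices compatible with the $\BGCG$ structure --- is not available: take $\kappa'=\id$, so that $\Gamma=\BGCG(B,\K_2,\id)\cong\SDD(B)$, which is connected and bi-transitive, and let $G$ be its full (color-preserving) symmetry group; for each $u\in\V(B)$ it contains the transposition swapping the two clones $(u,0)$ and $(u,1)$ and fixing all other vertices, since these two black vertices have identical neighbourhoods. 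That symmetry moves a single vertex of $\V(B_0)$ into $\V(B_1)$ and destroys the blocks of $\Delta_0$ at the adjacent white vertices, so your preliminary claim is false for this perfectly legitimate choice of $G$. To close the converse you must either produce, from an arbitrary witnessing $G$, a suitable subgroup (or another group altogether) that does preserve the two components, or proceed as the paper does; note that the paper's own proof is silent at exactly this junction (its converse begins from the assumption that $\kappa$ is a push), so the bridge from ``$\Gamma$ bi-transitive'' to ``$\kappa'$ is a push'' is not supplied there either and would require an independent argument --- the paper's remark that $\BGCG(\W(2m,2),\K_2,\Pp_4)$ is edge-transitive even though $\Pp_4$ is not a dart-transitive pairing shows how delicate this interface is.
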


\begin{proof}
 By applying Theorem~\ref{th:K2} with $\tau_1=\id_B$,  $\tau_0=\tau$, and $G_0=G_1=H$, we see that any $\kappa$ satisfying these two conditions is a push of $B$ and hence, by Theorem~\ref{theo:equivalent2}, $\BGCG(B, \K_2, \kappa)$ is bi-transitive.  

Conversely, let $\beta$ be any symmetry of $\Gamma$ which interchanges the two components of $X$.   If $\kappa$ is a push then the conditions of Theorem~\ref{th:K2} are satisfied.  Let $\phi_0$ be any isomorphism of $B$ onto component $B_0$ of $X$.  Then we can choose $\phi_1:B \rightarrow B_1$ to be $\phi_0\beta$.  Following the last paragraph of the proof of Theorem \ref{th:K2}, this choice forces $\tau_0$ to be the identity on $B$, and so $\tau_1^{-1} = \kappa^2$ in its action on edges.  Now  let $H = A\cap A^\kappa$.  It follows that $H$ is normalized by $\kappa$.  Since $G_1 \le A$ and $G_1 = G_0^\kappa \le A^\kappa$, $G_1$ must be a subgroup of $H$, and so H is transitive on darts of $B$.
\end{proof}

\subsection{Other pushes of wreath graphs}\label{sec:wreath}

Let us now provide examples of pushes  of the wreath graphs which do not arise from dart-transitive pairings of edges.

\begin{example}\label{ex:2}
Suppose $n$ is even  and let $B= \W(n,2)$. Let  $\rho, \mu$, $\tau_i$, $b_i$ and $d_i$  be as in Section~\ref{ex:wreath}. Let
  $$\kappa = (b_{0}~d_{0})(b_{1}~d_{1})\cdots(b_{n-1}~d_{n-1}),$$
  viewed as a permutation of $\E(B)$. Since $\kappa$ is an involution, it certainly satisfies (2) of Corollary~\ref{th:K2c}.  Let $\alpha = \tau_0\tau_2\cdots\tau_{n-2}$ and let $H=\langle\rho,\mu,\alpha\rangle$. Note that $\alpha^{\rho}=\tau_1\tau_3\cdots\tau_{n-1}\in H$ and thus $H$ is dart-transitive. It is easy to see that, as permutations of $\E(B)$, $\kappa$ commutes with both $\rho$ and $\mu$. With a little more effort, one can see that $\alpha^\kappa = \alpha^{\rho}\in H$ and thus $\kappa$ normalises $H$. By Corollary~\ref{th:K2c}, $\kappa$ is a push of $B$. 
\end{example}

\begin{example} Suppose $n$ is divisible by $4$ and write $n = 2m$, with $m$ even. Let $B= \W(n,2)$ and let $\rho, \mu$, $\tau_i$, $b_i$ and $d_i$ be as in Section~\ref{ex:wreath}. Let
 $$\kappa = \prod_{i = 0}^{m-1}(b_{i}~b_{i+m})(d_{i}~d_{i+m}),$$
   viewed as a permutation of $\E(B)$.  Since $\kappa$ is an involution, it certainly satisfies (2) of Corollary~\ref{th:K2c}. As in Example~\ref{ex:2}, let $\alpha = \tau_0\tau_2\cdots\tau_{n-2}$ and let $H=\langle\rho,\mu,\alpha\rangle$. Again, $H$ is dart-transitive and  $\kappa$ centralizes  $\langle\rho, \mu\rangle$. Computation shows that $\alpha^\kappa = \alpha\rho^m\in H$ and thus $\kappa$ normalises $H$. By Corollary~\ref{th:K2c}, $\kappa$ is a push of $B$.
\end{example}

\begin{example}
Suppose $n$ is odd and write $n = 2m+1$. Let $B= \W(n,2)$ and let $\rho, \mu$, $\tau_i$, $a_i$, $b_i$, $c_i$ and $d_i$ be as in Section~\ref{ex:wreath}. Let
 $$\kappa = (a_0~b_0)(c_0~d_0)\prod_{i = 1}^{m}(a_{2i}~c_{2i})(b_{2i}~d_{2i}),$$
    viewed as a permutation of $\E(B)$. Since $\kappa$ is an involution, it certainly satisfies (2) of Corollary~\ref{th:K2c}. Let $H=\langle \rho, \mu,\tau_{1}\rangle$. Clearly, $H$ is dart-transitive. We now show that $H$ is normalised by $\kappa$. Let $\tau= \tau_{2}\tau_{4}\cdots\tau_{2m}$.  The reader can check that $\rho^\kappa = \rho^\tau\in H$. Similarly, it is not hard to check that $\mu^\kappa = \mu\tau_0\tau_1\cdots\tau_{n-1}$. Finally, as a permutation of $\E(B)$, we have $\tau_1=(a_0~b_0)(c_0~d_0)(a_1~d_1)(c_1~b_1)$. In particular, $\tau_1$ commutes with $\kappa$  and thus $\kappa$ normalises $H$. By Corollary~\ref{th:K2c}, $\kappa$ is a push of $B$.  We offer without proof the claim that $\BGCG(B, \K_2, \kappa)$ is isomorphic to the rose window graph $\R_{4n}(2n+2, 2n+1)$.
\end{example}

\subsection{Non-involutory pushes}\label{sec:nonI}

 Note that all the pushes of wreath graphs given in Section~\ref{sec:wreath} are involutions and thus trivially satisfy condition (2) of Corollary~\ref{th:K2c}. We now give an example of a non-involutory push.

\begin{example}
Consider the graph $B$ of Example \ref{ex:c3c3}, shown in Figure \ref{fig:kappa}, and the permutation $$\kappa = (1~4~3~9)(5~7~8~6)(10~15~12~16)(13~17~18~14).$$ on its edges.  Computation shows that $R^\kappa = R^{-1}$ and $S^\kappa$ is rotation about the top center face in the figure.  Thus $\kappa$ normalizes  $H$, and $\kappa^2 = R^2$; hence $\kappa$ satisfies the conditions of Corollary \ref{th:K2c} and is thus a push for  $B$. This example shows that a push need not be an involution; in fact, every  element of the double coset of $\Aut(B)$ containing $\kappa$ has order 4 or 8.

\end{example}

\subsection{Non-wreath base graphs}\label{sec:last}

To show the variety possible, we have examined all the connected dart-transitive tetravalent simple graphs $B$ of order at most $16$ that are not isomorphic to a wreath graph and, for each of these, computed all their dart-transitive pairings up to conjugacy in $\Aut(\Gamma)$. For each  graph $B$ and dart-transitive pairing $\Pp$, we then constructed the graphs $\BGCG(B,\Pp)$ and $\BGCG(B, \K_2,\kappa_\Pp)$. The results are summarized in Table~\ref{Table}. (Instead of describing the pairings explicitly, we have simply numbered them. The notation for the graphs follows~\cite{C4}.)

\begin{table}[hhh]
\begin{center}
\begin{tabular}{|c|c|c|c|c|c|}
\hline
 $B$ & $\Pp$ &\multicolumn{1}{|c|}{$\BGCG(B,\Pp)$}&\multicolumn{1}{|c|}{$\BGCG(B, \K_2,\kappa_\Pp)$}\\
\hline
 
$K_5$ & 1 & $\CC_{10}(1,3)$ & $\R_{10}(4, 1)$\\

\hline
DW(3, 3) & 1 & DW(6, 3) & $\{4, 4\}_{6,0}$\\

\hline
$\CC_{10}(1,3)$ & 1 & $\SDD(\K_5)$ & SDD($\CC_{10}(1,3))$\\

\hline
$\R_6(1, 2)$ & 1  & SDD(Octahedron) & HC(F8)\\

\hline
$\R_6(1, 2)$ & 2  & $\R_{12}(8, 7)$ & HC(F8)\\

\hline
$\R_6(1, 2)$ & 3 & SDD(Octahedron)  & SDD($\R_6(5,4)$)\\

\hline

$\CC_{13}(1,5)$ & 1  & $\CC_{26}(1,5)$  & $\R_{26}(10, 1)$\\

\hline
L(Petersen) & 1  & PS(6,5;2)  & HC(F10)\\

\hline
$\CC_{15}(1,4)$ & 1  & $\CC_{30}(1,11)$  & $\R_{30}(22, 1)$\\

\hline
$\R_8(6, 5)$ & 1  & $\R_{16}(10, 9)$  & PL(SoP(4,4))\\

\hline
$\R_8(6, 5)$  & 2  & SDD($\K_{4,4}$)  & PL(SoP(4,4))\\

\hline
$\R_8(6, 5)$  & 3  & SDD($\K_{4,4}$)  & SDD($\R_8(6, 5)$)\\

\hline
$\R_8(6, 5)$  & 4   & MSY[4, 8, 3, 4]  &AMC[8,8, (3 6):(4 5)]\\

\hline
$\R_8(6, 5)$ & 5  & $\{4, 4\}_{4, 4}$  & AMC[8,8, (3 6):(4 5)]\\

\hline
\end{tabular}
\caption{Dart-transitive pairings in small non-wreath graphs}\label{Table}
\end{center}
\end{table}

\section{ Final questions}

\begin{enumerate}
\item Given a  connected tetravalent dart-transitive graph, we have already considered the problem of finding all of its dart-transitive pairings (see Question~\ref{q:cc} and the follow-up).  As we have seen in the discussion following Lemma~\ref{newlabel}, this also yields many of the pushes of the graph. How do we find the other pushes? Corollary \ref{th:K2c} appears to give an answer but its practical implementation appears to be difficult. Is there a way to define a 'canonical' representative $\kappa'$ of $A\kappa A$ which satisfies the conditions?
\item In subsection \ref{sec:wreath}, we exhibit a number of pushes of the wreath graph. Of course, the pushes arising from the dart-transitive pairings shown in section \ref{ex:wreath}   also exist.  Are those all?
\item  After $\K_1$ and $\K_2$, the next simplest connection graphs to consider are $k$-cycles, with $k\geq 3$.  How can one decide if a graph $B$ is suitable as a base graph for a $\BGCG$ construction with connection graph being a $k$-cycle?
\item Given a graph $B$ and a natural number $k$, how can we determine the dart-transitive pairings of $kB$?

\end{enumerate}


\begin{thebibliography}{99}

\bibitem{BonStel}
J.. van Bon, B.\ Stellmacher, 
On locally s-arc transitive graphs that are not of local characteristic $p$,
{\em J.\ Algebra} {\bf 528} (2019), 1--37.

\bibitem{CM} H.S.M.\ Coxeter and W.O.J.\ Moser, ``Generators and Relations for Discrete Groups'', Springer-Verlag (1972).

\bibitem{feng}
Y.-Q.\ Feng, Y.\ Wang,
Bipartite edge-transitive bi-p-metacirculants,
{\em Ars Math.\ Contemp.} {\bf 17} (2019) 591--615.


\bibitem{GLP} M.\  Giudici, C.\ H.\ Li, C.\ E.\ Praeger, Analysing finite locally $s$-arc transitive graphs,  {\em Trans.\ Amer.\ Math.\ Soc.} {\bf 356} 
 (2004),  291--317.
 
 \bibitem{jaj}
R.\ Jajcay, \v{S}. Miklavi\v{c}, P.\ \v{S}parl, G.\ Vasiljevi\'c, On certain edge-transitive bicirculants,
{\em Electron.\ J.\ Combin.} {\bf 26} (2019), Paper 2.6.

\bibitem{PSV4valent} P.~Poto\v{c}nik, P.~Spiga and G.~Verret, Bounding the order of the vertex-stabilizer in $3$-valent vertex-transitive and $4$-valent arc-transitive graphs, {\it J. Combin. Theory Ser.B}, \textbf{111} (2015) , 148--180.

\bibitem{C4} P.~Poto\v{c}nik and S.~Wilson, Census of  tetravalent edge-transitive graphs, 
\href{http://jan.ucc.nau.edu/~swilson/C4Site/Glossary.html}{http://jan.ucc.nau.edu/~swilson/C4FullSite/Glossary.html}, accessed February 2020.

\bibitem{recipe} P.~Poto\v{c}nik and S.~Wilson, Recipes for Edge-transitive Tetravalent Graphs, arXiv:1608.04158 [math.CO], August 2016.

\bibitem{PotWilg4} P.\ Poto\v{c}nik, S.\ Wilson,
  Tetravalent edge-transitive graphs of girth at most 4,
      {\it Journal of Combinatorial Theory Ser.~B},
       {\bf 97} (2007), 217--236. 

\bibitem{PX}  C. Praeger and M.-Y. Xu, A characterization of a class of symmetric graphs of twice prime valency, {\it European J. Combin.}, \textbf{10} (1989) , 91--102.

\bibitem{pablo}
P.\ Spiga,  An application of the local $C(G,T)$ theorem to a conjecture of Weiss,
{\em Bull.\ Lond.\ Math.\ Soc.} {\bf 48} (2016),  12--18.

\bibitem{erik} E.\ Swartz,
Locally 3-Arc-Transitive Regular Covers of Complete Bipartite Graphs,
{\em Electronic J.\ Combin.} {\bf 23} (2016), Paper P2.18.

\bibitem{Rose}  S. Wilson, Rose Window graphs, {\it Ars Math. Contemp.} \textbf{1} (2008), 7--19.



\end{thebibliography}
\end{document}